\DeclareMathAlphabet{\mathpzc}{OT1}{pzc}{m}{it}
\tikzset{boximg/.style={remember picture,black,thick,draw,inner sep=0pt,outer sep=0pt}}
\pgfplotsset{lua backend=true}
\pgfplotsset{select coords between index/.style 2 args={
    x filter/.code={
      \ifnum\coordindex<#1\fi
      \ifnum\coordindex>#2\fi
    }
}}
\pgfplotsset{
	compat=newest,
  cycle list/Dark2,
}
\pgfplotsset{
  	/pgfplots/colormap={redtoblue}{
			rgb=(
			0.12548999999999999,
			0.26274500000000001,
			0.50196099999999999)
			rgb=(
			0.57254899999999997,
			0.819608,
			0.87843099999999996)
			rgb=(
			0.94117600000000001,
			0.98431400000000002,
			0.98823499999999997)
			rgb=(
			0.76078400000000002,
			0.29411799999999999,
			0.21176500000000001)
			rgb=(
			0.54901999999999995,
			0.066667000000000004,
			0.098039000000000001)
    }
}
\pgfplotsset{
	/pgfplots/colormap={viridis}{
		rgb=(0.267004, 0.004874, 0.329415)
		rgb=(0.268510, 0.009605, 0.335427)
		rgb=(0.269944, 0.014625, 0.341379)
		rgb=(0.271305, 0.019942, 0.347269)
		rgb=(0.272594, 0.025563, 0.353093)
		rgb=(0.273809, 0.031497, 0.358853)
		rgb=(0.274952, 0.037752, 0.364543)
		rgb=(0.276022, 0.044167, 0.370164)
		rgb=(0.277018, 0.050344, 0.375715)
		rgb=(0.277941, 0.056324, 0.381191)
		rgb=(0.278791, 0.062145, 0.386592)
		rgb=(0.279566, 0.067836, 0.391917)
		rgb=(0.280267, 0.073417, 0.397163)
		rgb=(0.280894, 0.078907, 0.402329)
		rgb=(0.281446, 0.084320, 0.407414)
		rgb=(0.281924, 0.089666, 0.412415)
		rgb=(0.282327, 0.094955, 0.417331)
		rgb=(0.282656, 0.100196, 0.422160)
		rgb=(0.282910, 0.105393, 0.426902)
		rgb=(0.283091, 0.110553, 0.431554)
		rgb=(0.283197, 0.115680, 0.436115)
		rgb=(0.283229, 0.120777, 0.440584)
		rgb=(0.283187, 0.125848, 0.444960)
		rgb=(0.283072, 0.130895, 0.449241)
		rgb=(0.282884, 0.135920, 0.453427)
		rgb=(0.282623, 0.140926, 0.457517)
		rgb=(0.282290, 0.145912, 0.461510)
		rgb=(0.281887, 0.150881, 0.465405)
		rgb=(0.281412, 0.155834, 0.469201)
		rgb=(0.280868, 0.160771, 0.472899)
		rgb=(0.280255, 0.165693, 0.476498)
		rgb=(0.279574, 0.170599, 0.479997)
		rgb=(0.278826, 0.175490, 0.483397)
		rgb=(0.278012, 0.180367, 0.486697)
		rgb=(0.277134, 0.185228, 0.489898)
		rgb=(0.276194, 0.190074, 0.493001)
		rgb=(0.275191, 0.194905, 0.496005)
		rgb=(0.274128, 0.199721, 0.498911)
		rgb=(0.273006, 0.204520, 0.501721)
		rgb=(0.271828, 0.209303, 0.504434)
		rgb=(0.270595, 0.214069, 0.507052)
		rgb=(0.269308, 0.218818, 0.509577)
		rgb=(0.267968, 0.223549, 0.512008)
		rgb=(0.266580, 0.228262, 0.514349)
		rgb=(0.265145, 0.232956, 0.516599)
		rgb=(0.263663, 0.237631, 0.518762)
		rgb=(0.262138, 0.242286, 0.520837)
		rgb=(0.260571, 0.246922, 0.522828)
		rgb=(0.258965, 0.251537, 0.524736)
		rgb=(0.257322, 0.256130, 0.526563)
		rgb=(0.255645, 0.260703, 0.528312)
		rgb=(0.253935, 0.265254, 0.529983)
		rgb=(0.252194, 0.269783, 0.531579)
		rgb=(0.250425, 0.274290, 0.533103)
		rgb=(0.248629, 0.278775, 0.534556)
		rgb=(0.246811, 0.283237, 0.535941)
		rgb=(0.244972, 0.287675, 0.537260)
		rgb=(0.243113, 0.292092, 0.538516)
		rgb=(0.241237, 0.296485, 0.539709)
		rgb=(0.239346, 0.300855, 0.540844)
		rgb=(0.237441, 0.305202, 0.541921)
		rgb=(0.235526, 0.309527, 0.542944)
		rgb=(0.233603, 0.313828, 0.543914)
		rgb=(0.231674, 0.318106, 0.544834)
		rgb=(0.229739, 0.322361, 0.545706)
		rgb=(0.227802, 0.326594, 0.546532)
		rgb=(0.225863, 0.330805, 0.547314)
		rgb=(0.223925, 0.334994, 0.548053)
		rgb=(0.221989, 0.339161, 0.548752)
		rgb=(0.220057, 0.343307, 0.549413)
		rgb=(0.218130, 0.347432, 0.550038)
		rgb=(0.216210, 0.351535, 0.550627)
		rgb=(0.214298, 0.355619, 0.551184)
		rgb=(0.212395, 0.359683, 0.551710)
		rgb=(0.210503, 0.363727, 0.552206)
		rgb=(0.208623, 0.367752, 0.552675)
		rgb=(0.206756, 0.371758, 0.553117)
		rgb=(0.204903, 0.375746, 0.553533)
		rgb=(0.203063, 0.379716, 0.553925)
		rgb=(0.201239, 0.383670, 0.554294)
		rgb=(0.199430, 0.387607, 0.554642)
		rgb=(0.197636, 0.391528, 0.554969)
		rgb=(0.195860, 0.395433, 0.555276)
		rgb=(0.194100, 0.399323, 0.555565)
		rgb=(0.192357, 0.403199, 0.555836)
		rgb=(0.190631, 0.407061, 0.556089)
		rgb=(0.188923, 0.410910, 0.556326)
		rgb=(0.187231, 0.414746, 0.556547)
		rgb=(0.185556, 0.418570, 0.556753)
		rgb=(0.183898, 0.422383, 0.556944)
		rgb=(0.182256, 0.426184, 0.557120)
		rgb=(0.180629, 0.429975, 0.557282)
		rgb=(0.179019, 0.433756, 0.557430)
		rgb=(0.177423, 0.437527, 0.557565)
		rgb=(0.175841, 0.441290, 0.557685)
		rgb=(0.174274, 0.445044, 0.557792)
		rgb=(0.172719, 0.448791, 0.557885)
		rgb=(0.171176, 0.452530, 0.557965)
		rgb=(0.169646, 0.456262, 0.558030)
		rgb=(0.168126, 0.459988, 0.558082)
		rgb=(0.166617, 0.463708, 0.558119)
		rgb=(0.165117, 0.467423, 0.558141)
		rgb=(0.163625, 0.471133, 0.558148)
		rgb=(0.162142, 0.474838, 0.558140)
		rgb=(0.160665, 0.478540, 0.558115)
		rgb=(0.159194, 0.482237, 0.558073)
		rgb=(0.157729, 0.485932, 0.558013)
		rgb=(0.156270, 0.489624, 0.557936)
		rgb=(0.154815, 0.493313, 0.557840)
		rgb=(0.153364, 0.497000, 0.557724)
		rgb=(0.151918, 0.500685, 0.557587)
		rgb=(0.150476, 0.504369, 0.557430)
		rgb=(0.149039, 0.508051, 0.557250)
		rgb=(0.147607, 0.511733, 0.557049)
		rgb=(0.146180, 0.515413, 0.556823)
		rgb=(0.144759, 0.519093, 0.556572)
		rgb=(0.143343, 0.522773, 0.556295)
		rgb=(0.141935, 0.526453, 0.555991)
		rgb=(0.140536, 0.530132, 0.555659)
		rgb=(0.139147, 0.533812, 0.555298)
		rgb=(0.137770, 0.537492, 0.554906)
		rgb=(0.136408, 0.541173, 0.554483)
		rgb=(0.135066, 0.544853, 0.554029)
		rgb=(0.133743, 0.548535, 0.553541)
		rgb=(0.132444, 0.552216, 0.553018)
		rgb=(0.131172, 0.555899, 0.552459)
		rgb=(0.129933, 0.559582, 0.551864)
		rgb=(0.128729, 0.563265, 0.551229)
		rgb=(0.127568, 0.566949, 0.550556)
		rgb=(0.126453, 0.570633, 0.549841)
		rgb=(0.125394, 0.574318, 0.549086)
		rgb=(0.124395, 0.578002, 0.548287)
		rgb=(0.123463, 0.581687, 0.547445)
		rgb=(0.122606, 0.585371, 0.546557)
		rgb=(0.121831, 0.589055, 0.545623)
		rgb=(0.121148, 0.592739, 0.544641)
		rgb=(0.120565, 0.596422, 0.543611)
		rgb=(0.120092, 0.600104, 0.542530)
		rgb=(0.119738, 0.603785, 0.541400)
		rgb=(0.119512, 0.607464, 0.540218)
		rgb=(0.119423, 0.611141, 0.538982)
		rgb=(0.119483, 0.614817, 0.537692)
		rgb=(0.119699, 0.618490, 0.536347)
		rgb=(0.120081, 0.622161, 0.534946)
		rgb=(0.120638, 0.625828, 0.533488)
		rgb=(0.121380, 0.629492, 0.531973)
		rgb=(0.122312, 0.633153, 0.530398)
		rgb=(0.123444, 0.636809, 0.528763)
		rgb=(0.124780, 0.640461, 0.527068)
		rgb=(0.126326, 0.644107, 0.525311)
		rgb=(0.128087, 0.647749, 0.523491)
		rgb=(0.130067, 0.651384, 0.521608)
		rgb=(0.132268, 0.655014, 0.519661)
		rgb=(0.134692, 0.658636, 0.517649)
		rgb=(0.137339, 0.662252, 0.515571)
		rgb=(0.140210, 0.665859, 0.513427)
		rgb=(0.143303, 0.669459, 0.511215)
		rgb=(0.146616, 0.673050, 0.508936)
		rgb=(0.150148, 0.676631, 0.506589)
		rgb=(0.153894, 0.680203, 0.504172)
		rgb=(0.157851, 0.683765, 0.501686)
		rgb=(0.162016, 0.687316, 0.499129)
		rgb=(0.166383, 0.690856, 0.496502)
		rgb=(0.170948, 0.694384, 0.493803)
		rgb=(0.175707, 0.697900, 0.491033)
		rgb=(0.180653, 0.701402, 0.488189)
		rgb=(0.185783, 0.704891, 0.485273)
		rgb=(0.191090, 0.708366, 0.482284)
		rgb=(0.196571, 0.711827, 0.479221)
		rgb=(0.202219, 0.715272, 0.476084)
		rgb=(0.208030, 0.718701, 0.472873)
		rgb=(0.214000, 0.722114, 0.469588)
		rgb=(0.220124, 0.725509, 0.466226)
		rgb=(0.226397, 0.728888, 0.462789)
		rgb=(0.232815, 0.732247, 0.459277)
		rgb=(0.239374, 0.735588, 0.455688)
		rgb=(0.246070, 0.738910, 0.452024)
		rgb=(0.252899, 0.742211, 0.448284)
		rgb=(0.259857, 0.745492, 0.444467)
		rgb=(0.266941, 0.748751, 0.440573)
		rgb=(0.274149, 0.751988, 0.436601)
		rgb=(0.281477, 0.755203, 0.432552)
		rgb=(0.288921, 0.758394, 0.428426)
		rgb=(0.296479, 0.761561, 0.424223)
		rgb=(0.304148, 0.764704, 0.419943)
		rgb=(0.311925, 0.767822, 0.415586)
		rgb=(0.319809, 0.770914, 0.411152)
		rgb=(0.327796, 0.773980, 0.406640)
		rgb=(0.335885, 0.777018, 0.402049)
		rgb=(0.344074, 0.780029, 0.397381)
		rgb=(0.352360, 0.783011, 0.392636)
		rgb=(0.360741, 0.785964, 0.387814)
		rgb=(0.369214, 0.788888, 0.382914)
		rgb=(0.377779, 0.791781, 0.377939)
		rgb=(0.386433, 0.794644, 0.372886)
		rgb=(0.395174, 0.797475, 0.367757)
		rgb=(0.404001, 0.800275, 0.362552)
		rgb=(0.412913, 0.803041, 0.357269)
		rgb=(0.421908, 0.805774, 0.351910)
		rgb=(0.430983, 0.808473, 0.346476)
		rgb=(0.440137, 0.811138, 0.340967)
		rgb=(0.449368, 0.813768, 0.335384)
		rgb=(0.458674, 0.816363, 0.329727)
		rgb=(0.468053, 0.818921, 0.323998)
		rgb=(0.477504, 0.821444, 0.318195)
		rgb=(0.487026, 0.823929, 0.312321)
		rgb=(0.496615, 0.826376, 0.306377)
		rgb=(0.506271, 0.828786, 0.300362)
		rgb=(0.515992, 0.831158, 0.294279)
		rgb=(0.525776, 0.833491, 0.288127)
		rgb=(0.535621, 0.835785, 0.281908)
		rgb=(0.545524, 0.838039, 0.275626)
		rgb=(0.555484, 0.840254, 0.269281)
		rgb=(0.565498, 0.842430, 0.262877)
		rgb=(0.575563, 0.844566, 0.256415)
		rgb=(0.585678, 0.846661, 0.249897)
		rgb=(0.595839, 0.848717, 0.243329)
		rgb=(0.606045, 0.850733, 0.236712)
		rgb=(0.616293, 0.852709, 0.230052)
		rgb=(0.626579, 0.854645, 0.223353)
		rgb=(0.636902, 0.856542, 0.216620)
		rgb=(0.647257, 0.858400, 0.209861)
		rgb=(0.657642, 0.860219, 0.203082)
		rgb=(0.668054, 0.861999, 0.196293)
		rgb=(0.678489, 0.863742, 0.189503)
		rgb=(0.688944, 0.865448, 0.182725)
		rgb=(0.699415, 0.867117, 0.175971)
		rgb=(0.709898, 0.868751, 0.169257)
		rgb=(0.720391, 0.870350, 0.162603)
		rgb=(0.730889, 0.871916, 0.156029)
		rgb=(0.741388, 0.873449, 0.149561)
		rgb=(0.751884, 0.874951, 0.143228)
		rgb=(0.762373, 0.876424, 0.137064)
		rgb=(0.772852, 0.877868, 0.131109)
		rgb=(0.783315, 0.879285, 0.125405)
		rgb=(0.793760, 0.880678, 0.120005)
		rgb=(0.804182, 0.882046, 0.114965)
		rgb=(0.814576, 0.883393, 0.110347)
		rgb=(0.824940, 0.884720, 0.106217)
		rgb=(0.835270, 0.886029, 0.102646)
		rgb=(0.845561, 0.887322, 0.099702)
		rgb=(0.855810, 0.888601, 0.097452)
		rgb=(0.866013, 0.889868, 0.095953)
		rgb=(0.876168, 0.891125, 0.095250)
		rgb=(0.886271, 0.892374, 0.095374)
		rgb=(0.896320, 0.893616, 0.096335)
		rgb=(0.906311, 0.894855, 0.098125)
		rgb=(0.916242, 0.896091, 0.100717)
		rgb=(0.926106, 0.897330, 0.104071)
		rgb=(0.935904, 0.898570, 0.108131)
		rgb=(0.945636, 0.899815, 0.112838)
		rgb=(0.955300, 0.901065, 0.118128)
		rgb=(0.964894, 0.902323, 0.123941)
		rgb=(0.974417, 0.903590, 0.130215)
		rgb=(0.983868, 0.904867, 0.136897)
		rgb=(0.993248, 0.906157, 0.143936)
	}
}
\journal{Journal of Computational Physics}
\begin{document}

\begin{frontmatter}

  \title{Entropy stable high-order discontinuous Galerkin spectral-element methods on curvilinear, hybrid meshes}

  \author[label1]{Jens Keim\corref{cor}\fnref{fn}} %
  \author[label1]{Anna Schwarz\corref{cor}\fnref{fn}} %
  \author[label1]{Patrick Kopper} %
  \author[label1]{Marcel Blind} %
  \author[label2]{Christian Rohde} %
  \author[label1]{Andrea Beck} %

  \cortext[cor]{Corresponding author}
  \fntext[fn]{J. Keim and A. Schwarz share first authorship.}
  \affiliation[label1]{organization={Institute of Aerodynamics and Gas Dynamics, University of Stuttgart},%
    addressline={Wankelstr. 3},
    city={Stuttgart},
    postcode={70563},
    country={Germany}}
  \affiliation[label2]{organization={Institute of Applied Analysis and Numerical Simulation, University of Stuttgart},%
    addressline={Pfaffenwaldring 57},
    city={Stuttgart},
    postcode={70569},
    country={Germany}}

  \begin{abstract}
    Hyperbolic-parabolic partial differential equations are widely used for the modeling of complex, multiscale problems in science and engineering, e.g., turbulent flows.
    High-order methods such as the discontinuous Galerkin (\DG) scheme are attractive candidates for their numerical approximation.
    However, high-order methods are prone to instabilities in the presence of underresolved flow features.
    A popular counter measure to stabilize \DG~methods is the use of entropy-stable formulations based on summation-by-parts (\SBP) operators.
    While \textit{multidimensional} \SBP~operators enabled the use of entropy-stable formulations on non-hexahedral elements,
    the efficient construction of such operators is challenging.
    Therefore, it is advantageous to employ a tensor-product-based approach such as the discontinuous Galerkin spectral element method (\DGSEM), which, in its original form, is
    however limited to hexahedral elements.
    \newline
    The present paper aims to construct a robust and efficient entropy-stable \DGSE~scheme of arbitrary order on heterogeneous,
    curvilinear grids composed of triangular and quadrilateral elements or hexahedral, prismatic, tetrahedral and pyramid elements.
    To the author's knowledge, with the exception of hexahedral and quadrilateral elements, entropy-stable \DGSE~operators
    have been constructed exclusively for tetrahedral and triangular meshes.
    The extension of the \DGSEM~to more complex element shapes is achieved by means of a collapsed coordinate transformation.
    Legendre--Gauss quadrature nodes are employed as collocation points in conjunction with a generalized SBP operator and
    entropy-projected variables.
    The purely hyperbolic operator is extended to hyperbolic-parabolic problems by the use of a lifting procedure.
    To circumvent the penalizing time step restriction imposed by the collapsing, modal rather than nodal degrees of freedom are
    evolved in time, thereby relying on a memory-efficient weight-adjusted approximation to the inverse of the mass matrix.
    Essential properties of the proposed numerical scheme including free-stream preservation, polynomial and grid convergence as
    well as entropy conservation / stability are verified.
    Finally, with the flow around the common research model, the applicability of the presented method to real-world problems is demonstrated.
  \end{abstract}

  \begin{keyword}
    high-order \sep discontinuous Galerkin \sep entropy stable \sep arbitrary elements
  \end{keyword}

\end{frontmatter}

\section{Introduction}%
\label{sec:introduction}

Nonlinear hyperbolic or hyperbolic-parabolic conservation laws are widely used for the modeling of a variety of physical processes in science and engineering, e.g., in turbulent and electromagnetic flows or climate forecast, only to name a few examples.
Intrinsic features of nonlinear hyperbolic partial differential equations (\PDEs) are the generation of multiscale phenomena and the breakdown of classical solutions.
The latter eventually leads to the occurrence of discontinuities.
This multitude of flow features poses several challenges for the design of adequate numerical schemes.
First, it is imperative that they exhibit both high efficiency and scalability across massively parallel high-performance computing systems due to the immense amount of degrees of freedom involved.
Secondly, the necessity of accuracy in conjunction with robustness is paramount.
Finally, concerning application-oriented simulations, contemporary numerical schemes must include geometric flexibility, thereby facilitating the use of heterogeneous meshes.
\newline
A class of numerical methods able to meet a vast majority of these requirements are high-order methods capable of dealing with unstructured meshes.
A general superiority of high-order methods over their low-order counterparts are their excellent dispersion and dissipation properties for smooth solutions~\cite{Gassner2011}.
Among the high-order methods, the discontinuous Galerkin method has proven to be particularly well suited to fulfill these requirements.
The inherent data locality of \DG~schemes has demonstrated to provide superior performance on modern hardware and parallel systems~\cite{Blind2024a,Kempf2024}.
Moreover, DG schemes offer high flexibility in element sizes and shapes, allow for adaptive mesh refinement, and permit a local adjustment of the order of accuracy~\cite{Mavriplis1994, Mossier2022}.
Consequently, the \DG~and derived methods such as the tensor-product based discontinuous Galerkin spectral element method gained great popularity over recent decades, manifesting in a plethora of highly efficient code frameworks~\cite{Cantwell2015,Witherden2015,Krais2019,Schlottkelakemper2020,Parsani2021,Ferrer2023,Kempf2024}.
\newline
A general drawback of DG methods, similar to other high-order methods, is their lack of robustness in the presence of discontinuities (Gibbs phenomenon) or due to strongly non-linear flux functions (aliasing).
Hence, additional stabilization techniques are required to prevent the generation of physically inadmissible states which might cause the crash of a simulation or even worse, the divergence to a physically admissible but wrong solution.
Well-established approaches to stabilize high-order methods are, e.g., artificial viscosity~\cite{Persson2012}, modal filtering~\cite{Dzanic2022}, slope limiting~\cite{Kuzmin2021}, polynomial de-aliasing~\cite{Gassner2012}, and total variation diminishing or total variation bounded finite-volume schemes~\cite{Sonntag14,Dumbser2016,Mossier2022,Huerta2012,Hennemann2021}.
However, these stabilization techniques often come along with heuristic tuning parameters and lack any rigorous proofs for stability.
Conversely, entropy-stable schemes guarantee physically admissible solutions over time.
Their only requirements are a stable time step restriction, the presence of a convex entropy and an associated entropy/entropy flux pair, and the fulfillment of certain physical admissibility criteria, e.g., the positivity of density and pressure in case of the Euler equations~\cite{Chan2018}.
\newline
Discretely entropy conservative and entropy stable schemes for hyperbolic systems of conservation laws were first developed for low-order finite volume methods by \citet{Tadmor1987}, utilizing specifically designed two-point fluxes which conserve a particular mathematical entropy.
This approach was subsequently extended to fully-discrete and semi-discrete higher-order schemes on periodic domains and structured grids~\cite{LeFloch2002}.
The development of high-order entropy-stable discretizations based on finite-differences~\cite{Fisher2013} and spectral collocation
schemes~\cite{Carpenter2014} for the Euler and Navier-Stokes equations constituted the basis for contemporary entropy-stable formulations.
For this, the authors relied on the combination of entropy conservative two-point flux functions, summation-by-parts operators, and a diagonal mass matrix.
\citet{Gassner2013} was the first to recognize that the matrix operators of a \DGSE~scheme on Legendre--Gauss--Lobatto nodes are SBP operators.
This enabled the construction of entropy-stable \DGSE~discretizations, first for the Burgers equation~\cite{Gassner2013} and later for systems of conservation laws~\cite{Carpenter2014,Gassner2016}.
The extension to a more flexible choice of compute and quadrature nodes was achieved by \citet{DelReyFernandez2014} based on a \textit{generalized SBP property} and further applied to non-tensor-product simplex elements based on the use of \textit{multidimensional SPB operators}~\cite{Hicken2016}.
These developments initiated a strong interest and further research on entropy-stable \DG~schemes on triangular
and tetrahedral elements~\cite{Chen2017,Chan2017,Crean2018,Chan2019}, see~\cite{TianhengChen2020} for a recent overview.
\newline
A general drawback of multidimensional operators and especially \textit{multidimensional SBP operators} is the significantly increasing computational costs which scale asymptotically with $\mathcal{O} (\ppn^{2 d})$, where $\ppn$ and $d$ indicate the polynomial degree and the number of spatial dimensions, respectively.
Conversely, tensor-product operators used in \DGSEM~are amenable to sum-factorization techniques~\cite{Orszag1980} and scale asymptotically with $\mathcal{O} (\ppn^{d+1})$.
Additionally, \textit{multidimensional SBP operators} require the evaluation of two-point fluxes between all pairs of quadrature nodes compared to the line-wise procedure utilized in tensor-product formulations.
In view of the aforementioned drawbacks, in conjunction with the challenges in deriving suitable quadrature rules for high-polynomial degrees~\cite{Worku2024}, \textit{multidimensional SBP operators} are only rarely employed for polynomial degrees greater than four or five~\cite{Montoya2024b}.
However, due to the excellent exponential convergence properties, polynomial degrees of $N \geq 5$ are favorable and nowadays common practice in turbulence simulations even for application-oriented problems, possibly including shocks~\cite{Durrwachter2021,Blind2023,Blind2024b}.
\newline
In a recent series of papers, \citeauthor{Montoya2022}~\cite{Montoya2022,Montoya2024a,Montoya2024b} exploited the favorable properties of tensor-product operators on geometrically flexible element shapes by utilizing efficient collocated tensor-product SBP operators on collapsed coordinates for curved triangular and tetrahedral elements using Legendre--Gauss-Lobatto and Legendre--Gauss--Radau quadrature nodes.
For this, the authors followed the theoretical framework of \citet{Dubiner1991}, which enabled the extension of the \DGSEM~to non-hexahedral meshes~\cite{Lomtev1999,Warburton1999,Kirby2000,Chan2016} through the use of a \textit{Duffy transformation}~\cite{Duffy1982}.
This tensor-product SBP operator has been demonstrated to achieve an asymptotically optimal $\mathcal{O} ( \ppn^{\dd+1})$ complexity in terms of floating-point operations, similar to the \DGSEM~on quadrilateral and hexahedral elements.
Furthermore, through time evolution of modal rather than nodal \DOF, the authors were able to avoid the penalizing time step restriction commonly observed for collapsed coordinate discretizations~\cite{Dubiner1991}.
Here, the authors exploited the \enquote{warped} tensor-product structure of the Proriol--Koornwinder--Dubiner (\PKD) orthogonal modal polynomial basis~\cite{Proriol1957,Koornwinder1975,Dubiner1991} in conjunction with a weight-adjusted approximation of the inverse of the curvilinear mass matrix proposed by~\cite{Chan2017}.
The combination enabled an efficient and high-order accurate numerical scheme for arbitrary polynomial degrees without the aforementioned drawbacks of \textit{multidimensional SBP operators}.
\newline
It is important to highlight that aside from the previously described collapsed coordinates transformation, a variety of additional approaches are available in the literature which also enable the use of (entropy-stable) \DGSEM~for flow problems with complex geometries.
Examples are immersed boundary schemes~\cite{Ferrer2023,Kemm2020}, cut-cell and extended \DG~based approaches~\cite{Kummer2021,Henneaux2023,May2022,Taylor2024}, or the splitting of polymorphic into hexahedral elements~\cite{Parsani2021,Worku2025}.
All the aforementioned methods have their validity for existence, however, the performance, accuracy, and applicability of each may be dependent upon the specific test case.
\newline
The present paper aims to extend the conceptual ideas outlined in~\cite{Montoya2024a} to prismatic and pyramidal elements and
eventually construct a robust and efficient entropy-stable \DGSE~scheme on mixed element meshes composed of hexahedral, prismatic, tetrahedral and pyramid elements.
In contrast to~\cite{Montoya2024a}, a \DGSEM~with a Legendre--Gauss quadrature is utilized in combination with a generalized SBP
operator and entropy-projected variables.
A Legendre--Gauss quadrature is chosen due to the increased integration accuracy compared to Legendre--Gauss--Lobatto or
Legendre--Gauss--Radau nodes~\cite{Chan2022a}.
Additionally, as pointed out by~\cite{Schwarz2025}, \DGSEM~with Legendre--Gauss nodes provides a more efficient scheme for
underresolved turbulent flows compared to Legendre--Gauss--Lobatto nodes, if a similar level of accuracy has to be achieved.
To the authors' best knowledge, this is the first time an entropy-stable \DGSEM~has been formulated for hybrid grids featuring hexahedral, tetrahedral, prismatic, and pyramidal elements.
To summarize, the contributions of the present work are:
\begin{itemize}
  \item An entropy conservative / entropy stable \DGSE~operator of arbitrary order on curvilinear meshes is constructed based on the diagonal-norm SBP operator on the reference hexahedron, tetrahedron, prism, and pyramid using Legendre--Gauss quadrature. In two dimensions, the SBP operator is defined on the reference square and triangle.
  \item The hyperbolic operator is extended to hyperbolic-parabolic problems by approximating viscous fluxes with the BR1 (lifting) scheme~\cite{Bassi1997}. This is achieved similarly to the hyperbolic part by utilizing the collapsed coordinates transformation.
  \item A modal-based formulation of the proposed \DGSEM~is advanced in time, but a nodal-based spatial integration with more quadrature points than modal degrees of freedom is utilized \cite{Gassner2009}.
    For this, the \enquote{warped} tensor-product structure of the orthogonal modal polynomial basis is exploited in conjunction with a weight-adjusted approximation of the inverse of the curvilinear mass matrix.
    This offers the potential for a reduction in computational cost by exploiting tensor-product properties, while simultaneously avoiding excessively small time steps \cite{Montoya2024a}.
  \item The free-stream preservation, convergence properties, and robustness of the proposed scheme are demonstrated.
    For the latter, the weakly compressible Taylor--Green vortex serves as a test case.
  \item The applicability of the proposed scheme to more complex, large-scale problems is demonstrated using the flow around the common research model (CRM) as an example.
\end{itemize}
In summary, this work facilitates the high-order simulation of more complex, (three-dimensional) large-scale applications that would be infeasible to mesh using only hexahedral elements.
Combined with the entropy-stable tensor-product based \DGSEM~with a diagonal-norm SBP operator on polyhedral elements, this provides a robust and stable numerical scheme for underresolved turbulent flows of arbitrary order on curvilinear hybrid meshes.
To the authors knowledge, this is the first time a \DGSEM-based entropy-stable split formulation has been proposed for prismatic and pyramid elements.
In contrast to known approaches in the literature \cite{Worku2025,Parsani2021}, both based on the splitting of tetrahedral or polyhedral elements into hexahedrals, the present approach eschews the unnecessary generation of additional elements through this process of element splitting.
Moreover, zonal flow oriented hexahedral based boundary grid refinements can be easily realized and automatically connected to the overall coarse grid.

The outline of this paper is as follows:
Following this introduction, the theoretical foundation of this paper is presented in~\cref{sec:theory}, starting from the governing equations including entropy variables and progressing to the collapsed coordinate transformation.
The entropy stable DGSE scheme on mixed elements is presented in~\cref{sec:methods}, followed by a detailed validation of the implementation by numerical experiments in~\cref{sec:validation}.
This paper concludes with a brief summary in~\cref{sec:conclusion}.

\section{Fundamentals}%
\label{sec:theory}

This section starts with the theoretical foundation of this paper.
First, the considered governing equations, and the corresponding entropy variables are presented.
Since the focus of this work is the derivation of an efficient DGSE method on hybrid meshes, two steps are necessary:
\begin{enumerate*}[label={\alph*)}]
  \item A transformation of the governing equations from the computational domain $\Omega$ in physical space to the polytopal reference space $\refraumcol$ composed of
unit hexahedral, prisms, pyramids, or tetrahedrons, given by the inverse of the mapping $\mapping : \refraumcol \to \Omega, \refpos
\mapsto \mathbf{x}$.
  \item The transformation from the polytopal reference space $\refraumcol$ to the unit hexahedral $\refraum$ using the inverse of the mapping
$\mappingcol : \refraum = [-1,1]^d \to \refraumcol, \refposquad \mapsto \refpos$, called collapsed coordinates transformation.
\end{enumerate*}
An illustration is given in~\cref{fig:theory:mapping}.
As such, the second part of this section provides a concise overview of the mappings from the polytopal reference space to the unit
hexahedral.
It is imperative to acknowledge that the general 3D case is considered throughout this work.

\begin{figure}[htb!]
  \includegraphics[width=0.99\linewidth]{./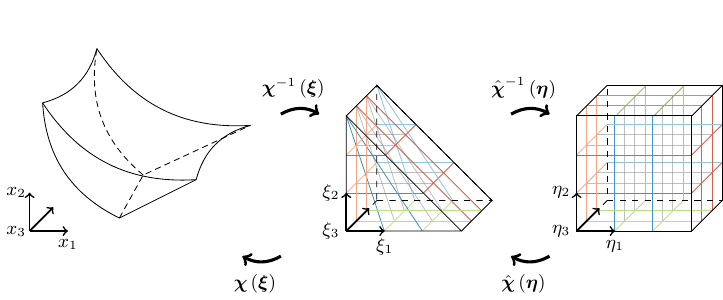}
  \caption{Schematic sketch of the transformation from the physical to the polytopal reference element and the unit
  hexahedral element (from left to right).}%
  \label{fig:theory:mapping}
\end{figure}

\subsection{Governing Equations}%
\label{sec:theory:governingeq}

The governing equations considered in this work are the compressible Navier--Stokes equations (NSE), written as
\begin{align}
  \lr{\cons}_t + \gradientX \cdot \fphys(\cons,\nabla \cons) = \Null, \hspace{0.5cm} \ \fphys(\cons,\nabla \cons) = \Fc - \Fv,
  \label{eq:theory:euler}
\end{align}
where $\cons=\arr{\rho, \rho \vel, \rho e} \in \R^{\nvar}$, $\nvar=d+2$, is the vector of conserved variables, consisting of the density $\rho$, the velocity
vector $\vel=\arr{\vel[1],...,\vel[d]} \in \R^d$ and the total energy $e$ per unit mass, and $\fphys \in \R^{d\times\nvar}$ are the
physical fluxes, comprised of the convective flux $\Fc$ and the viscous flux $\Fv$, given as
\begin{align}
  \Fc =& \arr{
    \makebox[\widthof{$\rho \vel$}][l]{$\rho \vel$},
    \makebox[\widthof{$\rho \vel \otimes \vel + \p \mathbf{I}$}][l]{$\rho \vel \otimes \vel + \p \mathbf{I}$},
    \makebox[\widthof{$\stress \cdot \vel + \conductivity \nabla T$}][l]{$(\rho e + \p) \vel$}
    }, \\
  \Fv =& \arr{
    \makebox[\widthof{$\rho \vel$}][l]{$0$},
    \makebox[\widthof{$\rho \vel \otimes \vel + \p \mathbf{I}$}][l]{$\stress$},
    \makebox[\widthof{$\stress \cdot \vel + \conductivity \nabla T$}][l]{$\stress \cdot \vel + \conductivity \nabla T$}
    }.
\end{align}
with the unit tensor $\mathbf{I} \in \R^{3 \times 3}$, the temperature $T$, the thermal conductivity $\conductivity$, and the pressure for a perfect gas $p=(\gamma-1)(\rho e- 0.5 \rho (\vel \cdot \vel))$ with $\gamma=1.4$.
Following Stokes' hypothesis, which assumes that the bulk viscosity is zero, the viscous stress tensor reduces to $\stress =
\dynvisc (\lr{\nabla \vel}^\transpose + \nabla \vel - \frac{2}{3} \lr{\nabla \cdot \vel}\mathbf{I}) \in \R^{3 \times 3}$ for a Newtonian fluid, where $\dynvisc$ denotes the dynamic viscosity.
The heat flux is modeled according to Fourier's hypothesis with $\conductivity = \frac{c_p \dynvisc}{\Pr}$, the Prandtl number $\Pr$, and the specific heat at constant pressure $c_p$ of ambient air.

\subsubsection{Entropy Variables}
The NSE are equipped with a convex entropy/entropy flux pair, given as
\begin{align}
  \lr{\entropy,\fentropyref} = \lr{-\frac{\rho \fphysentropyref}{\gamma-1},-\frac{\rho \fphysentropyref \vel}{\gamma-1}}
\end{align}
with the thermodynamic entropy $\fphysentropyref=\log(p\rho^{-\gamma})$.
The corresponding entropy variables are the Jacobian of the mathematical entropy with respect to the conservative variables, given
as
\begin{align}
  \entropyvar(\cons) \coloneq \fracp{v}{\cons} = \arr{\frac{\gamma-\fphysentropyref}{\gamma-1}-\frac{\rho}{2p} \abs{\vel}^2,\frac{\rho
  \vel}{p},-\frac{\rho}{p}}.
\end{align}
The conservative variables can be expressed in terms of the entropy variables as
\begin{align}
  \cons(\entropyvartilde) = \arr{- (\rho \epsilon) \entropyvartilde[d+2], (\rho \epsilon) \entropyvartilde_{2:d+1}, (\rho \epsilon)
  \lr{1-\frac{\abs{\entropyvartilde_{2:d+1}}^2}{2\entropyvartilde[d+2]}}}
\end{align}
with $\entropyvartilde=(\gamma-1)\entropyvar$ and the specific internal energy $\rho \epsilon$ defined by the entropy variables as
\begin{align}
  \rho \epsilon = \lr{\frac{(\gamma-1)}{(-\entropyvartilde[d+2])^\gamma}}^{1/(\gamma-1)} \exp{\lr{\frac{-\fphysentropyref}{\gamma-1}}}, \quad
  \fphysentropyref = \gamma - \entropyvartilde[1] + \frac{\abs{\entropyvartilde_{2:d+1}}^2}{2 \entropyvartilde[d+2]}.
\end{align}
For a well defined entropy, positivity of density and pressure has to be guaranteed.
Multiplying~\cref{eq:theory:euler} with the entropy variables and integrating over $\Omega$ yields the entropy inequality for the compressible NSE,
\begin{align}
  \int_{\Omega}{\entropy_t}~dx \leq \int_{\partial \Omega} \lr{\entropyvar^\transpose \fphys_v - \fentropyref} \cdot \normalvecref~dS_{\Omega},
\end{align}
see, e.g.,~\cite{Dalcin2019,Chan2022b} for more details.

\subsection{Collapsed Coordinates Transformation}%
\label{sec:theory:collcoord}

Starting from the computational domain $\Omega$, the governing equations can be transformed to the polytopal reference space using
$\mapping^{-1}$.
The Jacobian matrix of this mapping is defined as $\Jm = \lr{\gradientXI \mapping} \in \Rdd$ with its corresponding determinant $\J=\det\Jm \in \R$.
This results in
\begin{align}
  \J \lr{\cons}_t + \gradientXI \cdot \fphysref = \Null, \hspace{0.5cm}  \fphysref = \M^\transpose \fphys,
\end{align}
with the contravariant fluxes $\fphysref$, the metric terms $\M=\lr{\adj \Jm}^\transpose \otimes \Iunit_{\nvar}$, and the identity matrix $\Iunit_{\nvar} \in
\R^{\nvar\times\nvar}$.
The term $\adj (\cdot)$ denotes the adjoint matrix.
Subsequently, the polytopal reference space is transformed to the unit hexahedral to enable the derivation of the DGSEM.
For this, the mapping $\mappingcol : \refraum = [-1,1]^d \to \refraumcol, \refposquad
\mapsto \refpos$, from the unit hexahedral to the polytopal reference space composed of the unit prism, pyramid, or tetrahedron (for $d=3$) and from the unit square to the unit triangle (for $d=2$) can be defined.
The coordinates in $\refraum$ and $\refraumcol$ are denoted as $\refposquad = \arr{\refposquad[1],\refposquad[2],\refposquad[3]}$ and $\refpos = \arr{\refpos[1],\refpos[2],\refpos[3]}$, respectively.
The mapping $\mappingcol$ is equipped with its Jacobian $\Jmref = \gradientETA \mappingcol \in \R^{d \times d}$, the corresponding inverse $\Jmref^{-1} = (\gradientETA
\hat{\mapping})^{-1} = \Jref^{-1} \adj \Jmref$, and the determinant of the Jacobian $\Jref = \det \Jmref \in \R$.
The boundary of the reference element $\partial \refraumcol$ is partitioned into $\Nf$ faces $\{\Gamma^{\ifa}\}_{\ifa \in
  \{1:\Nf\}}, \Gamma^{\ifa} \subset \partial \refraumcol$ each equipped with a constant unit outward normal vector
    $\normalvecref^{(\ifa)}$.

In order to calculate the metric terms in the polytopal reference space, a set of corresponding basis functions has to be
defined.
For this, the polytopal reference space is equipped with a set of normalized Proriol--Koornwinder--Dubiner (PKD) orthonormal basis functions that are recursively constructed using normalized Jacobi polynomials $\mathcal{P}^{(\alpha,\beta)} \in \mathbb{P}^{\ppn}(\refraumcol)$ which are orthogonal with respect to the weight functions $(1-\refposquad)^{\alpha}(1-\refposquad)^{\beta}, \ \alpha,\beta > -1$, see, e.g.,~\cite{Hesthaven2008} for further details.
Here, $\mathbb{P}^{\ppn}(\refraumcol)$ denotes the space of polynomials of degree $\ppn$ on the polytopal reference element,
$\refraumcol$, with $M(\ppn)$ as the number of dimensions of this space.

In the following, the polytopal reference space for each element type is introduced, including the corresponding basis functions,
Jacobian, outward-pointing normal vectors and quadrature weights.
The sequel is a collection of material that originates from~\cite{Karniadakis2005,Bergot2013,Chan2017}.

\subsubsection{Volume and Surface Quadrature}
The mapping $\mappingcol$ allows to approximate integrals on the polytopal reference element by a tensor-product quadrature rule on the unit cube with $\ppn+1$ distinct nodes $\{\refposquad_{i}\}_{i=0}^{\ppn}$ and non-zero weights $\{\w_i\}_{i=0}^{\ppn}$ in each direction, resulting in a total of $\nq=(\ppn+1)^d$ quadrature nodes $\refposquadq \in \R^{d \times \nq}$ in the volume.
For an arbitrary function $f(\refpos)$, this leads to
\begin{align}
  \int_{\refraumcol} f(\refpos)~d\refpos = \intE{f(\refposquad) \Jref(\refposquad)}
  \approx \sum\limits_{i,j,k=0}^{\ppn} f(\refposquad_{ijk}) \Jref(\refposquad_{ijk}) \w_{ijk}
\end{align}
with $\w_{ijk} = \w_i \w_j \w_k$.
Surface integrals can be similarly approximated as
\begin{align}
  \int_{\partial \refraumcol} f(\refpos)~d\hat{S} = \intpE{f(\refposquad) \Jref^{(\ifa)}(\refposquad)}
  \approx \sum\limits_{i,j=0}^{\ppn} f(\refposquad_{ij}) \Jref^{(\ifa)}(\refposquad_{ij}) \w_{i} \w_{j}
\end{align}
with the corresponding determinant of the Jacobian on the surface $\Jref^{(\ifa)}(\refposquad)$.
This results in a total of $\nf = (\ppn+1)^{d-1}$ distinct quadrature nodes on the surface $\refposquad^{(\ifa)} \in \R^{d \times \nf}$.
\subsubsection{Prismatic and Triangular Elements}
The reference domain of the unit prism is given by
\begin{align}
  \refraumcol = \{\refpos \in [-1,1]^3 : \refpos[1] + \refpos[2] \leq 0\}
\end{align}
with $M(\ppn) = \frac{1}{2} (\ppn+1)^2(\ppn+2)$ orthonormal (w.r.t the corresponding weights) basis functions
\begin{align}
  \testfunc_{ijk}(\refpos) = \sqrt{2} \mathcal{P}_i^{(0,0)}(\refposquad[1]) \mathcal{P}_j^{(2i+1,0)}(\refposquad[2])
  \mathcal{P}_k^{(0,0)}(\refposquad[3]) \ (1-\refposquad[2])^i, \nonumber \\
  \forall (i,j,k) \geq 0 \wedge i+j \leq \ppn \wedge k \leq \ppn.
\end{align}
The mapping $\mappingcol: \refraum \to \refraumcol$ from the unit cube to the reference prism and its inverse are written as
\begin{align}
  \mappingcol(\refposquad)  &= \arr{\frac{(1+\refposquad[1])(1-\refposquad[2])}{2} - 1, \refposquad[2], \refposquad[3]}, \\
  \mappingcol^{-1}(\refpos) &= \arr{2 \frac{1+\refpos[1]}{1-\refpos[2]} - 1, \refpos[2], \refpos[3]: \refpos[2] \ne 1}.
\end{align}
The adjoint $\adj \Jmref$ and the inverse of the determinant of the Jacobian $\Jref^{-1}$ of the mapping from the unit cube to the unit prism are defined as
\begin{align}
  \adj \Jmref(\refpos) =
  \adj \Jmref(\mappingcol(\refposquad)) =
  \left(\begin{matrix}
    1                            & 0                            & 0                            \\
    \frac{(1+\refposquad[1])}{2} & \frac{(1-\refposquad[2])}{2} & 0                            \\
    0                            & 0                            & \frac{(1-\refposquad[2])}{2} \\
  \end{matrix}\right)
\end{align}
and $\Jref^{-1} =\frac{2}{1-\refposquad[2]}$, respectively.
The normalized normal vectors $\normalvecref$ in the polytopal reference space for each of the five faces are given as
\begin{align}
  \normalvecref^{(\ifa)} = \begin{cases}
      \arr{0,0,-1} &:~\Gamma^{\ifa}=\Gamma^{1} \coloneq \{\refpos \in \refraum: \refpos[3]=-1\}, \\
      \arr{0,-1,0} &:~\Gamma^{\ifa}=\Gamma^{2} \coloneq \{\refpos \in \refraum: \refpos[2]=-1\}, \\
      \arr{\nicefrac{1}{\sqrt{2}},\nicefrac{1}{\sqrt{2}},0} &:~\Gamma^{\ifa}=\Gamma^{3} \coloneq \{\refpos \in \refraum:
        \refpos[1]+\refpos[2]=0\}, \\
      \arr{-1,0,0} &:~\Gamma^{\ifa}=\Gamma^{4} \coloneq \{\refpos \in \refraum: \refpos[1]=-1\}, \\
       \arr{0,0,1} &:~\Gamma^{\ifa}=\Gamma^{5} \coloneq \{\refpos \in \refraum: \refpos[3]=1\}.
    \end{cases}
\end{align}
The corresponding determinant of the Jacobian on the surface $\Jref^{(\ifa)}(\refposquad_{ij}), \ i,j=0,...,\ppn,$ is defined as
\begin{align}
  \Jref^{(\ifa)}(\refposquad_{ij}) =
  \begin{cases}
    \frac{(1-\refposquad[2]_j)}{2} &: \ifa = \{1,5\} \ (\text{triangular side}), \\
    \sqrt{2}                     &: \ifa = 3, \\
    1                              &: \text{otherwise},
  \end{cases}
\end{align}
with $\Jref^{(\ifa)} \in \R^{(\ppn+1) \times (\ppn+1)}$.
The 2D equivalent to the prismatic element is a triangle. As such, in 2D, the corresponding mapping reduces to the reference
triangle, given as
\begin{align}
  \refraumcol = \{\refpos \in [-1,1]^2, \refpos[1] + \refpos[2] \leq 0\}
  \label{eq:refraum_tria}
\end{align}
with the dimension of the ansatz space $M(\ppn) = \frac{1}{2} (\ppn+1)(\ppn+2)$, i.e., the third direction and its corresponding faces $\Gamma^1$ and $\Gamma^5$ are dropped.

\subsubsection{Pyramid Elements}

The reference domain of the biunit right pyramid is given by
\begin{align}
  \refraumcol = \{\refpos \in [-1,1]^3 : \refpos[1] + \refpos[3] \leq 0; \refpos[2] + \refpos[3] \leq 0\}
\end{align}
with $M(\ppn)=\frac{1}{6} (\ppn+1)(\ppn+2)(2\ppn+3)$ orthonormal (w.r.t the corresponding weights) polynomial (semi-nodal) basis functions
\begin{align}
  \testfunc_{ijk}(\refpos) = \frac{1}{C_{nk}} \mathcal{P}_i^{(0,0)}(\refposquad[1]) \mathcal{P}_j^{(0,0)}(\refposquad[2])
  \mathcal{P}_k^{(2(\ppn-k)+3,0)}(\refposquad[3]) \ &\lr{\frac{1-\refposquad[3]}{2}}^{\ppn-k}, \nonumber \\
  \forall (i,j,k) \geq 0 \wedge i+k \leq \ppn \wedge j+k \leq \ppn. &
\end{align}
with $C_{nk}=\frac{\ppn+2}{2^{2(\ppn-k)+2}(2(\ppn-k)+3)}$.
The mapping $\mappingcol: \refraum \to \refraumcol$ from the unit cube to the unit pyramid and its inverse are written as
\begin{align}
  \mappingcol(\refposquad)  &= \arr{\frac{(1+\refposquad[1])(1-\refposquad[3])}{2} - 1, \frac{(1+\refposquad[2])(1-\refposquad[3])}{2} - 1, \refposquad[3]}, \\
  \mappingcol^{-1}(\refpos) &= \arr{2 \frac{1+\refpos[1]}{1-\refpos[3]} - 1, 2 \frac{1+\refpos[2]}{1-\refpos[3]} - 1, \refpos[3]:
  \refpos[3] \neq 1}.
\end{align}
The adjoint $\adj \Jmref$ and the inverse of the determinant of the Jacobian $\Jref^{-1}$ from the mapping of the unit cube to the unit
pyramid are defined as
\begin{align}
  \adj \Jmref(\mappingcol(\refposquad)) =
  \left(\begin{matrix}
    \frac{(1-\refposquad[3])}{2}                   & 0                                      &   0                        \\
    0                                              & \frac{(1-\refposquad[3])}{2}           & 0                          \\
    \frac{(1+\refposquad[1])(1-\refposquad[3])}{4} & \frac{(1+\refposquad[2])(1-\refposquad[3])}{4} & \frac{(1-\refposquad[3])^2}{4} \\
  \end{matrix}\right)
\end{align}
and $\Jref^{-1} =\frac{4}{(1-\refposquad[3])^2}$, respectively.
The normalized normal vectors $\normalvecref$ in the polytopal reference space for each of the five faces are given as
\begin{align}
  \normalvecref^{(\ifa)} = \begin{cases}
      \arr{0,0,-1} &:~\Gamma^{\ifa}=\Gamma^{1} \coloneq \{\refpos \in \refraum: \refpos[3]=-1\}, \\
      \arr{0,-1,0} &:~\Gamma^{\ifa}=\Gamma^{2} \coloneq \{\refpos \in \refraum: \refpos[2]=-1\}, \\
      \arr{\nicefrac{1}{\sqrt{2}},0,\nicefrac{1}{\sqrt{2}}} &:~\Gamma^{\ifa}=\Gamma^{3} \coloneq \{\refpos \in \refraum:
        \refpos[1]+\refpos[3]=0\}, \\
      \arr{-1,0,0} &:~\Gamma^{\ifa}=\Gamma^{4} \coloneq \{\refpos \in \refraum: \refpos[1]=-1\}, \\
      \arr{0,\nicefrac{1}{\sqrt{2}},\nicefrac{1}{\sqrt{2}}} &:~\Gamma^{\ifa}=\Gamma^{5} \coloneq \{\refpos \in \refraum:
        \refpos[2]+\refpos[3]=0\}.
    \end{cases}
\end{align}
The corresponding determinant of the Jacobian on the surface $\Jref^{(\ifa)}(\refposquad_{ij}), \ i,j=0,...,\ppn,$ is defined as
\begin{align}
  \Jref^{(\ifa)}(\refposquad_{ij}) =
  \begin{cases}
    \frac{(1-\refposquad[3]_j)}{2} &: \ifa \in \{2,4\} \ (\text{triangular side}), \\
    \sqrt{2} \frac{(1-\refposquad[3]_j)}{2} &: \ifa \in \{3,5\} \ (\text{triangular side}), \\
    1 &: \text{otherwise},
  \end{cases}
\end{align}
with $\Jref^{(\ifa)} \in \R^{(\ppn+1) \times (\ppn+1)}$.
It has to be noted that there is no 2D equivalent to the pyramid.

\subsubsection{Tetrahedral Elements}

The reference domain of the unit tetrahedron is given by
\begin{align}
  \refraumcol = \{\refpos \in [-1,1]^3 : \refpos[1] + \refpos[2] + \refpos[3] \leq 0\}
\end{align}
with $M(\ppn)=\frac{1}{6} (\ppn+1)(\ppn+2)(\ppn+3)$ orthonormal (w.r.t the corresponding weights) basis functions
\begin{align}
  \testfunc_{ijk}(\refpos) = 2 \sqrt{2} \mathcal{P}_i^{(0,0)}(\refposquad[1]) \mathcal{P}_j^{(2i+1,0)}(\refposquad[2])
  \mathcal{P}_k^{(2(i+j)+2,0)}(\refposquad[3]) \ (1-\refposquad[2])^i &(1-\refposquad[3])^{i+j}, \nonumber \\
  \forall (i,j,k) \geq 0 \wedge i+j+k \leq \ppn. &
\end{align}
The mapping $\mappingcol: \refraum \to \refraumcol$ from the unit cube to the unit tetrahedron and its inverse are written as
\begin{align}
  \mappingcol(\refposquad)  &= \arr{\frac{(1+\refposquad[1])(1-\refposquad[2])(1-\refposquad[3])}{4} - 1, \frac{(1+\refposquad[2])(1-\refposquad[3])}{2} - 1, \refposquad[3]}, \\
  \mappingcol^{-1}(\refpos) &= \arr{2 \frac{1+\refpos[1]}{-\refpos[2]-\refpos[3]} - 1, 2 \frac{1+\refpos[2]}{1-\refpos[3]} - 1, \refpos[3]:
  \refpos[3] \neq 1 \wedge (\refpos[2]+\refpos[3]) \neq 0}.
\end{align}
The adjoint $\adj \Jmref$ and the inverse of the determinant of the Jacobian $\Jref^{-1}$ of the mapping from the unit cube to the unit tetrahedron are defined as
\begin{align}
  \adj \Jmref(\mappingcol(\refposquad)) =
  \left(\begin{matrix}
    \frac{(1-\refposquad[3])}{2}               & 0                                            & 0                                 \\
    \frac{(1+\refposquad[1])(1-\refposquad[3])}{4} & \frac{(1+\refposquad[2])(1-\refposquad[3])}{4}                   & 0                                \\
    \frac{(1+\refposquad[1])(1-\refposquad[3])}{4} & \frac{(1+\refposquad[2])(1-\refposquad[2])(1-\refposquad[3])}{8} &
    \frac{(1-\refposquad[2])(1-\refposquad[3])^2}{8} \\
  \end{matrix}\right)
\end{align}
and $\Jref^{-1} =\frac{8}{(1-\refposquad[2])(1-\refposquad[3])^2}$, respectively.
The normalized normal vectors $\normalvecref$ in the polytopal reference space for each of the four faces are given as
\begin{align}
  \normalvecref^{(\ifa)} = \begin{cases}
      \arr{0,0,-1} &:~\Gamma^{\ifa}=\Gamma^{1} \coloneq \{\refpos \in \refraum: \refpos[3]=-1\}, \\
      \arr{0,-1,0} &:~\Gamma^{\ifa}=\Gamma^{2} \coloneq \{\refpos \in \refraum: \refpos[2]=-1\}, \\
      \arr{\nicefrac{1}{\sqrt{3}},\nicefrac{1}{\sqrt{3}},\nicefrac{1}{\sqrt{3}}} &:~\Gamma^{\ifa}=\Gamma^{3} \coloneq \{\refpos \in \refraum: \refpos[1]+\refpos[2]+\refpos[3]=0\}, \\
      \arr{-1,0,0} &:~\Gamma^{\ifa}=\Gamma^{4} \coloneq \{\refpos \in \refraum: \refpos[1]=-1\}.
    \end{cases}
\end{align}
The corresponding determinant of the Jacobian on the surface $\Jref^{(\ifa)}(\refposquad_{ij}), \ i,j=0,...,\ppn,$ is defined as
\begin{align}
  \Jref^{(\ifa)}(\refposquad_{ij}) =
  \begin{cases}
    \frac{(1-\refposquad[2]_j)}{2}           &: \ifa = 1, \\
    \frac{(1-\refposquad[3]_j)}{2}           &: \ifa = \{2,4\}, \\
    \sqrt{3} \frac{(1-\refposquad[3]_j)}{2}  &: \text{otherwise},
  \end{cases}
\end{align}
with $\Jref^{(\ifa)} \in \R^{(\ppn+1) \times (\ppn+1)}$.

\subsubsection{Hexahedral and Quadrilateral Elements}
The reference domain of the unit hexahedral is $\refraumcol = \{\refpos \in [-1,1]^3\}$ with $M(\ppn) = (\ppn+1)^3$ orthonormal
basis functions
\begin{align}
  \testfunc_{ijk}(\refpos) = \mathcal{P}_i^{(0,0)}(\refposquad[1]) \mathcal{P}_j^{0,0)}(\refposquad[2])
  \mathcal{P}_k^{(0,0)}(\refposquad[3]), \ i,j,k = 0,...,\ppn.
\end{align}
The mapping $\mappingcol$ and its inverse reduce to the mapping $\mappingcol: \refpos \mapsto \refposquad$  and
${\mappingcol}^{-1}: \refposquad \mapsto \refpos$, respectively. As such, the adjoint and the inverse of the determinant of the corresponding
Jacobian are $\adj \Jmref=\mathbf{I}$ and $\Jref^{-1} = 1$ with the unit matrix $\mathbf{I}\in\R^{d\times d}$.
The normalized normal vectors $\normalvecref$ in the hexahedral reference space for each of the six faces are given as
\begin{align}
  \normalvecref^{(\ifa)} = \begin{cases}
      \arr{0,0,-1} &:~\Gamma^{\ifa}=\Gamma^{1} \coloneq \{\refpos \in \refraum: \refpos[3]=-1\}, \\
      \arr{0,-1,0} &:~\Gamma^{\ifa}=\Gamma^{2} \coloneq \{\refpos \in \refraum: \refpos[2]=-1\}, \\
      \arr{-1,0,0} &:~\Gamma^{\ifa}=\Gamma^{3} \coloneq \{\refpos \in \refraum: \refpos[1]=-1\}, \\
      \arr{ 1,0,0} &:~\Gamma^{\ifa}=\Gamma^{4} \coloneq \{\refpos \in \refraum: \refpos[1]=1\}, \\
      \arr{ 0,1,0} &:~\Gamma^{\ifa}=\Gamma^{5} \coloneq \{\refpos \in \refraum: \refpos[2]=1\}, \\
      \arr{ 0,0,1} &:~\Gamma^{\ifa}=\Gamma^{6} \coloneq \{\refpos \in \refraum: \refpos[3]=1\},
    \end{cases}
\end{align}
and the corresponding determinant of the Jacobian on the surface $\Jref^{(\ifa)} = \mathbf{I} \in  \R^{(\ppn+1) \times (\ppn+1)}$.
The 2D equivalent to a hexahedron is a quadrilateral element. As such, in 2D, the corresponding mapping reduces to the reference
quadrangle, given as $\refraumcol = \{\refpos \in
[-1,1]^2\}$ with $M(\ppn) = (\ppn+1)^2$, i.e., the third direction and its corresponding faces $\Gamma^1$ and $\Gamma^6$ are dropped.

\section{Nodal Discontinuous Galerkin Spectral Element Method on Mixed Elements}%
\label{sec:methods}

In this section, the discontinuous Galerkin spectral element method on hybrid meshes is introduced.
In the present work, Legendre--Gauss nodes are utilized as collocation points in preference to Legendre--Gauss--Lobatto and
Legendre--Gauss--Radau nodes due to the increased integration accuracy.
First, the weak form of the DGSEM is presented.
Then, the entropy-stable DGSEM based on the entropy variables and the generalized summation-by-parts operator is outlined, followed by a brief discussion of the treatment of viscous fluxes.
This section concludes with a discussion on the temporal integration of the resulting DGSE scheme and an analytical proof of the free-stream preservation and entropy conservation/stability properties.
\newline
The numerical methods presented in this paper are implemented in the open-source high-order solver
FLEXI\footnote{\url{https://github.com/flexi-framework/flexi}}~\cite{Krais2019}.
The meshes are generated with the high-order pre-processor PyHOPE\footnote{\url{https://pypi.org/project/PyHOPE/}}.

\subsection{Weak Form}

The computational domain $\Omega \subset \Rd$ with the number of dimensions $d \in \lbrace 2,3 \rbrace$ is composed of $\nel$ non-overlapping polytopal
curvilinear elements $\{\Omega^{(\iel)}\}_{\iel \in \{1:\nel\}}$.
Curvilinear elements are approximated by the corresponding orthonormal basis functions of degree $\ppngeo$ introduced in \cref{sec:theory}.
The boundary of each computational element $\partial \Omega^{(\iel)}$ is partitioned into faces $\{\Gamma^{\ifa}\}_{\ifa \in
\{1:\Nf\}}$, $\Gamma^{\ifa} \subset \partial \Omega^{(\iel)}$ with an outward unit normal vector $\normalvec$, and $\Nf$ denotes
  the number of element faces.
Each element is transformed to the polytopal reference space by the inverse of the previously introduced mapping $\mapping^{(\iel)} : \refraumcol \to \Omega^{(\iel)}, \refpos
\mapsto \mathbf{x}$ from the
reference element $\refpos \in \refraumcol$ to the physical space $\mathbf{x} \in \Omega$, cf.~\cref{fig:theory:mapping}.
The Jacobian matrix of this mapping is defined as $\Jm^{(\iel)} = \lr{\gradientXI \mapping^{(\iel)}}$ with its corresponding determinant $\J^{(\iel)}=\det\Jm^{(\iel)}$.
Subsequently, each polytopal reference element is mapped to the unit element $\refraum$ via the inverse of the mapping $\mappingcol$ and projected onto the $L_2$-space comprised of polynomials
$\testfunc \in \P(\refraum,\R^{\nvar})$ of order $\ppn$, which are chosen to be the tensor-product of 1D Lagrange polynomials
$\testfunc_{ijk}=\lagrange_i(\refposquad[1])\lagrange_j(\refposquad[2])\lagrange_k(\refposquad[3])$ with $i,j,k=0,\ldots,\ppn$.
The nodal solution in the reference element $\refraum$ is approximated by 1D Lagrange polynomials of degree $\ppn$, exploiting the Galerkin
property. This leads to
\begin{align}
  \consh^{(\iel)}(\mappingcol(\refposquad),t) = \mathbb{I}^{\ppn}(\cons) := \sum_{i,j,k=0}^\ppn \check{\cons}^{(\iel)}_{ijk}(t)
  \lagrange_i(\refposquad[1])\lagrange_j(\refposquad[2])\lagrange_k(\refposquad[3]) \in \R^{\nvar\times\nq}
\end{align}
with the nodal degrees of freedom $\check{\cons}_{ijk}^{(\iel)}(t) \in \R^{\nvar}$ in the $\iel$-th element, and the number of interpolation nodes in the volume $\nq \in \N$.
For simplicity, the subscript $h$ is omitted from $\cons$ in the following.
Finally, integration by parts leads to the weak form of \cref{eq:theory:euler}, given as
\begin{align}
  \projE{\diagJref \diagJ^{(\iel)} \cons_{t}^{(\iel)},\testfunc(\refposquad)} -& \projE{\fphysref^{(\iel)},
  \nabla_{\refposquad} \testfunc(\refposquad)} \nonumber \\
    +& \intpE{(\fphysref^{(\iel)}\cdot\normalvecref^{(\iel)})^\ast \testfunc(\refposquad)}  = \Null,
  \label{eq:dgsem_weak}
\end{align}
where $\projE{\cdot,\cdot}$ denotes the $L_2$ scalar product on $\refraum$, $\diagJref = \diag(\Jref)$, and $\diagJ^{(\iel)} = \diag(\J^{(\iel)})$.
Contravariant fluxes $\fphysref^{(\iel)}$ include the physical flux $\fphys^{(\iel)}$ and the metric terms $(\M^{(\iel)})^\transpose$, written as
\begin{align}
  \fphysref^{(\iel)} = (\adj \Jmref)^{(p)} (\M^{(\iel)})^\transpose \fphys^{(\iel)}, \
  (\M^{(\iel)})=\lr{\adj  \Jm^{(\iel)} }^\transpose \otimes \Iunit_{\nvar}.
  \label{eq:fcontra}
\end{align}
Since interpolation and differentiation only commute if the discretization error is zero, otherwise
$\mathbb{I}^{\ppn}(\mathbf{q}') \neq (\mathbb{I}^{\ppn}(\mathbf{q}))'$, cf.~\citep[22]{Kopriva2009}, these metric terms are approximated by the
conservative curl form derived in~\cite{Kopriva2006} to guarantee that the metric identities, $\divXI \adj \Jm = \mathbf{0}$, hold on the discrete level.
To summarize, the metric terms are computed as follows.
\begin{enumerate*}
  \item[a)] The metric terms, i.e., the conservative curl form, are computed on the corresponding polytopal reference element using the orthonormal basis functions introduced in \cref{sec:theory}.
  \item[b)] These metric terms are interpolated to the quadrature nodes on the polytopal reference element.
  \item[c)] The mapping of the metric terms to the unit cube reference element is achieved by means of the \textit{Duffy transformation}, in other words, by the corresponding adjoint of the Jacobian $(\adj \Jmref)$.
\end{enumerate*}
\newline
Neighboring elements are weakly coupled via the numerical flux normal to the element faces $\{\Gamma^{\ifa}\}_{\ifa \in
\{1:\Nf\}}$, resulting in
\begin{align}
  (\fphysref^{(\iel)}\cdot\normalvecref^{(\ifa)})^\ast
  =  \diagJ^{(\iel,\ifa)} (\fphys^{(\iel,\ifa)}\cdot\normalvec^{(\ifa)})^\ast
  =  \diagJ^{(\iel,\ifa)} f^\ast(\cons^{(\ifa),+} , \cons^{(\ifa),-}; \normalvec^{(\ifa)}).
\end{align}
The term $f^\ast$ denotes the numerical flux function, here approximated via Roe's numerical flux with the entropy fix
by~\citet{Harten1983b} or the local Lax-Friedrichs (Rusanov) flux, and $\Nf$ denotes the number of element surfaces.
The outward-pointing physical normal vector $\normalvec \in \Rd$ is computed via Nanson's formula
\begin{align}
  \abs{(\adj {\Jm^{(\iel)}})^\transpose \normalvecref^{(\ifa)}} \normalvec^{(\ifa)} = (\adj
    {\Jm^{(\iel)}})^\transpose \normalvecref^{(\ifa)}
    \label{eq:nanson}
\end{align}
with the surface element $\J^{(\iel,\ifa)} = \abs{(\adj {\Jm^{(\iel)}})^\transpose \normalvecref^{(\ifa)}}, \ \diagJ^{(\iel,\ifa)} = \diag(\J^{(\iel,\ifa)}),$ and the unit normal vector in the polytopal reference space $\normalvecref^{(\ifa)}$, see~\cref{sec:theory}.
The approximate solutions on the left $\cons^{(\ifa),+}$ and right $\cons^{(\ifa),-}$ of a face $\Gamma^{\ifa}$ are given as 1D
  volume operations due to the tensor-product nature of the interpolation using the surface Vandermonde
\begin{align}
  (\Vm_f^{(\ifa)})_{ij} = \lagrange_j(\refposquad^{(\ifa)}_i), \Vm_f \in \R^{\nf\times\nq},
\end{align}
where $\refposquad^{(\ifa)}$ are the $\nf$ quadrature nodes on the respective facets, see~\cref{sec:theory}.
This results in
\begin{align}
    {\cons}^{(\ifa)}(\mappingcol(\refposquad),t) = \Vm_f^{(\ifa)} {\cons}^{(\iel)}
      (\mappingcol(\refposquad),t).
\end{align}

The discrete form of~\cref{eq:dgsem_weak} is obtained using Legendre--Gauss quadrature on $\nq$ Legendre--Gauss nodes, where the interpolation and integration nodes are collocated, yielding an integration accuracy of $2\ppn+1$ in 1D.
This requires the definition of the following operators as
\begin{align}
  \Wm &= \diag(\vvec(\wbold \otimes \wbold \otimes \wbold)), & \{\w\}_{i=0}^\ppn,\nonumber \\
  \Dm_{ij} &= \lagrange_{j}' ({\refposquadq}_i), & i,j = 1,...,\nq, \\
  \Wm_f &= \diag(\vvec(\wbold \otimes \wbold)),\nonumber
\end{align}
which are the mass matrix $\Wm \in \R^{\nq\times\nq}$ and the polynomial differentiation operator $\Dm \in \R^{\nq\times\nq}$ composed of exact derivatives of the Lagrange polynomials.
The surface weights are denoted as $\Wm_f \in \R^{\nf\times\nf}$. %
The $\vvec(\cdot)$ operator flattens a matrix into a one-dimensional vector, and $\otimes$ indicates the dyadic product.
The semi-discrete weak form is then given as
\begin{align}
  \Wm \diagJref(\refposquadq) \diagJ^{(\iel)}(\mappingcol(\refposquadq)) \cons_t^{(\iel)} & \nonumber \\
  - \sum_{p=1}^{d} \Wm \Dm^\transpose \left({(\adj \Jmref)^{(p)} {\M^{(\iel)}}^\transpose}\right) \fphys^{(\iel,p)}(\cons^{(\iel)}(\mappingcol(\refposquadq)))) & \nonumber \\
  + \sum_{\ifa=1}^{\Nf} (\Vm_f^{(\ifa)})^\transpose \Wm_f \diagJref^{(\ifa)} \diagJ^{(\iel,\ifa)} f^\ast(\cons^{(\ifa),+} , \cons^{(\ifa),-}; \normalvec^{(\ifa)}) & = \ \Null,
  \label{eq:dgsem_weak_discrete}
\end{align}
with the determinant of the Jacobian on the surface, denoted as $\diagJref^{(\ifa)}$, see~\cref{sec:theory} for its definition.

\subsection{Entropy-Stable DGSEM on Legendre--Gauss Nodes}

In this work, an entropy-stable DGSEM on Legendre--Gauss nodes on hybrid meshes is introduced using entropy variables and the generalized decoupled/hybridized SBP operator
proposed by~\citet{Chan2018}
\begin{align}
  \Qm = \left(\begin{matrix}
    \Wm \Dm - \nicefrac{1}{2} \Vm_f^\transpose \Wm_f \Vm_f &~ \nicefrac{1}{2} \Vm_f^\transpose \Wm_f \\
    -\nicefrac{1}{2} \Wm_f \Vm_f                           &~ \nicefrac{1}{2} \Wm_f
  \end{matrix}\right)
  \label{eq:sbp}
\end{align}
with the property $\Qm^\transpose + \Qm = \mathrm{diag}(\Null,\Wm_f)$.
To the author's knowledge, this is the first time an entropy-stable DGSEM has been applied to mixed element meshes including hexahedral, prismatic, tetrahedral, and pyramidal elements.
Following~\cite{Chan2018,Chan2019}, the matrix formulation of~\cref{eq:dgsem_weak_discrete} for the entropy-stable DGSEM on mixed element meshes can be retrieved as
\begin{align}
  \Wm \diagJref(\refposquadq) \diagJ^{(\iel)}(\mappingcol(\refposquadq)) \cons_t^{(\iel)}
  + \sum_{p=1}^{d} \left[\begin{matrix} \mathbf{I} & \Vm_f^{\transpose} \end{matrix} \right] \lr{2 \mathbf{Q} \circ
  \fphysref^{(p)}(\qentropyvar^{(\iel)})} \mathbf{1} & \nonumber \\
  + \sum_{\ifa=1}^{\Nf} (\Vm_f^{(\ifa)})^\transpose \Wm_f
  \lr{\diagJref^{(\ifa)} \diagJ^{(\iel,\ifa)} f^\ast(\qentropyproj^+,\qentropyproj^-;\normalvec^{(\ifa)}) - (\fphysref(\qentropyproj^{(\ifa)}) \cdot \normalvecref^{(\ifa)})} &= \Null,
    \label{eq:esdgsem_matrix}
\end{align}
with the vector of all ones $\mathbf{1} \in \R^2$, $\qentropyvar^{(\iel)} = \entropyvar(\cons^{(\iel)}(\mappingcol(\refposquadq)))$, and $\qentropyproj$ are the entropy-projected variables at the facets
\begin{align}
  \qentropyproj = \cons(\entropyvarsurf), \hspace{1cm}
  \entropyvarsurf = \Vm_f \entropyvarvol(\cons).
\end{align}
The symbol $\fphysref^{(p)} \in \R^{\nvar \times (\nq+\nf)}$ denotes the contravariant entropy conservative (and/or kinetic energy preserving) fluxes in the volume and on the surface based on an adequate two-point flux $\fsharp$ according to~\citet{Tadmor1987} or~\citet{Jameson2008}.
The contravariant entropy conservative two-point fluxes on the surface are indicated by $\fphysref(\qentropyproj^{(\ifa)})$.
Using~\cref{eq:fcontra}, the term $\fphysref(\qentropyproj^{(\ifa)}) \cdot \normalvecref^{(\ifa)}$ can be also written as $\fphysref(\qentropyproj^{(\ifa)}) \cdot \normalvecref^{(\ifa)} = \diagJref^{(\ifa)} \diagJ^{(\iel,\ifa)} (\fphys(\qentropyproj^{(\ifa)}) \cdot \normalvecref^{(\ifa)})$.
The numerical flux at the cell boundaries, given as
\begin{align}
  f^\ast(\qentropyproj^+,\qentropyproj^-;\normalvec^{(\ifa)}) = \fsharp ({\qentropyproj}^+,{\qentropyproj}^-) + \frac{1}{2} \mathcal{H}
    \jump{\qentropyproj}
\end{align}
includes the entropy conservative numerical flux function and a matrix dissipation operator.
Here, a Roe-type penalization, denoted as $\mathcal{H}$, is used, which has been proven to avoid the order reduction generally observed for entropy stable methods~\cite{Hindenlang2019}.
The jump operator $\jump{\cdot}$ is defined as $\jump{\cdot} = [(\cdot)^+-(\cdot)^-]$.

With \cref{eq:sbp}, \cref{eq:esdgsem_matrix} can also be written as
\begin{align}
  \Wm \diagJref(\refposquadq) \diagJ^{(\iel)}(\mappingcol(\refposquadq)) \cons^{(\iel)}_t = \mathcal{R}^{(\iel)}, \quad
  \mathcal{R}^{(\iel)} \coloneq - \mathcal{Q}^{(\iel)} - \mathcal{E}^{(\iel)}.
  \label{eq:esdgsem_discrete}
\end{align}
For this, the following operators are defined
\begin{align}
  \mathcal{E}^{(\iel)} =& \sum_{\ifa=1}^{\Nf} (\Vm_f^{(\ifa)})^\transpose \Wm_f \diagJref^{(\ifa)} \left[\mathcal{R}^{(\iel,\ifa)} + \diagJ^{(\iel,\ifa)}
    f^\ast(\qentropyproj^{(\ifa),+} , \qentropyproj^{(\ifa),-}; \normalvec^{(\ifa)}) \right], \\
  \mathcal{Q}^{(\iel)} =& \ \sum_{p=1}^{d} \left[(\adj \Jmref)^{(p)} \ \avg{(\M^{(\iel)})^\transpose}
   ~\fsharp^{(p)}(\qentropyvar^{(\iel)}) \ \Wm \Dm \right. \nonumber \\
                & - \left. \Dm^\transpose \Wm \ (\adj \Jmref)^{(p)} \ \avg{(\M^{(\iel)})^\transpose}
   ~\fsharp^{(p)}(\qentropyvar^{(\iel)}) \right]
\end{align}
and
\begin{align}
  \mathcal{R}^{(\iel,\ifa)} =
  -         &  {\Vm_f^{(\ifa)}} \fsharp(\qentropyvar^{(\iel)}) \left(\avg{(\M^{(\iel)})^\transpose} \cdot \normalvecref^{(\ifa)}\right) \nonumber \\
  -         &  {\Vm_f^{(\ifa)}} \fsharp(\qentropyvar^{(\iel)},\qentropyproj^{(\ifa)}) \left(\frac{1}{2}\lr{(\M^{(\iel)})^\transpose +
               \adj \Jm^{(\iel,\ifa)}} \cdot \normalvecref^{(\ifa)}\right) \nonumber \\
  +         &  {\color{white}{\Vm_f^{(\ifa)}}} \fsharp(\qentropyvar^{(\iel)},\qentropyproj^{(\ifa)}) \left(\frac{1}{2}\lr{({\M^{(\iel)}})^\transpose +
               \adj \Jm^{(\iel,\ifa)}} \cdot \normalvecref^{(\ifa)}\right),
\end{align}
where the definition of the contravariant fluxes in the volume, given as
\begin{align}
  (\fphysref_q^{(p)})_{ij} &= (\adj \Jmref)^{(p)} \avg{(\M^{(\iel)})^\transpose}_{ij}~\fsharp^{(p)}
  ({\entropyvar}_i^{(\iel)},{\entropyvar}_j^{(\iel)}),
\end{align}
is utilized.
Here, $\avg{\cdot}_{ij}= 0.5 [(\cdot)_i+(\cdot)_j]$ denotes the averaging operator.
The Jacobian on the surface is defined as $\Jm^{(\iel,\ifa)} = \Vm_f \Jm^{(\iel)}$.

\subsubsection{Flux Computation on Triangular Facets}
It is important to note that the numerical flux function employed in~\cref{eq:dgsem_weak_discrete} assumes that the quadrature nodes
on the faces are unique, i.e., their locations on the left and the right face coincide. This is trivially fulfilled for quadrilateral and triangular faces in the polytopal reference space, but not
necessarily true for collapsed triangular faces which can be arbitrarily collapsed depending on the orientation of element
local coordinate system.
In order to surmount this limitation, the authors in~\cite{Warburton1995} proposed an algorithm which orients the local coordinate system in
each element to ensure that the quadrature nodes on the triangular facets are unique.
In this work, a different approach is adopted to ensure unique nodes on triangular facets for arbitrarily unstructured
grids.
For this, the extrapolated solutions on the left and right of an element interface are transformed
from the collapsed coordinate system to the polytopal reference domain $\refraumcol$ with $\nf(\ppn)$ quadrature nodes and weights
on the triangular facet given by~\citet{Xiao2010}. The reader is referred to~\cite{Xiao2010} for further details on the
corresponding quadrature rule.
The numerical flux function is evaluated on these quadrature nodes and the flux is mapped back from $\refraumcol$ to the
collapsed coordinate system of each of the adjacent volumes.
The procedure can be summarized as follows: %
\begin{enumerate*}[label={\alph*)}]
  \item The entropy-projected solution left and right to a triangular element interface between a tetrahedron and another tetrahedron, a prism, or
a pyramid is projected from the $\nf=(\ppn+1)^2$ quadrature points on the face to the quadrature nodes on the triangle given by~\cite{Xiao2010}. This ensures that the solution on the left and right of this interface is defined on unique integration points.
  \item The numerical flux computation is performed on this point set.
  \item The final flux is projected back to the local coordinate system of the corresponding adjacent elements.
\end{enumerate*}

\subsubsection{Second-Order Terms}
Evaluation of the viscous fluxes in the NSE requires the computation of the gradients of the primitive variables, $\nabla \cons^{\text{prim}}$.
In this work, the BR1 (lifting) scheme~\cite{Bassi1997} is utilized to approximate the gradients of $\cons^{\text{prim}}$, %
which requires satisfying an additional set of equations, given as
\begin{align}
  \mathbf{g} = \gradientX \cons^{\text{prim}},
\end{align}
where $\mathbf{g}$ are the lifted gradients.
The solution to this supplementary set of equations is obtained by deriving its weak form and applying
the DGSEM, similarly to the procedure above (see, e.g.,~\cite{Krais2021}), leading to
\begin{align}
  \Wm \diagJref(\refposquad) \diagJ^{(\iel)}(\refpos) \mathbf{g} =& -\sum\limits_{p=1}^{d}\Wm \Dm^\transpose
  (\adj \Jmref)^{(p)} \ \avg{(\M^{(\iel)})^\transpose} \cons^{\text{prim}} \nonumber \\
                                                          &+
                                                        \sum_{\ifa=1}^{\Nf} ({\Vm_f^{(\ifa)}})^\transpose \Wm_f \diagJref^{(\ifa)}
                                                          \diagJ^{(\iel,\ifa)}\avg{\cons^{\text{prim},(\ifa)} \cdot
                                                            \normalvec^{(\ifa)}}.
  \label{eq:lifting}
\end{align}
The final equation can be expressed as
\begin{align}
  \Wm &\diagJref(\refposquad) \diagJ^{(\iel)}(\refpos) \cons^{(\iel)}_t = \mathcal{R}^{(\iel)}, \nonumber \\
  \mathcal{R}^{(\iel)} \coloneq - \mathcal{Q}^{(\iel)} - \mathcal{E}^{(\iel)} & - \sum\limits_{p=1}^{d}\Wm
  \Dm^\transpose (\adj \Jmref)^{(p)} \ \avg{(\M^{(\iel)})^\transpose}\fphys_v^{(p)}(\cons, \mathbf{g}) \nonumber \\
                                                                              & + \sum_{\ifa=1}^{\Nf}
                                                                                ({\Vm_f^{(\ifa)}})^\transpose \Wm_f \diagJref^{(\ifa)}
                                                                                \diagJ^{(\iel,\ifa)}\avg{\fphys_v^{(\ifa)} \cdot \normalvec^{(\ifa)}}.
  \label{eq:dgsem_lifting}
\end{align}
with $\avg{{\fphys_v}^{(\ifa)} \cdot \normalvec^{(\ifa)}}$ as the average viscous flux at an element interface.

\subsubsection{Modal Time-Stepping}

A straight-forward temporal integration of the tensor-product approximation in collapsed coordinates can result in excessively small
time steps~\cite{Dubiner1991}.
According to~\cite{Montoya2024b}, this can be avoided by advancing the modal polynomial coefficients in time instead of the nodal degrees of freedom.
For this, the \enquote{warped} tensor-product structure of the \PKD~orthogonal modal polynomial basis in conjunction with a weight-adjusted approximation of the inverse of the curvilinear mass matrix is exploited. %
Their combination yields a computationally and memory efficient scheme~\cite{Montoya2024a}.
This scheme may be interpreted as an over-integration approach, in which case the modal-based formulation is advanced in time, but a nodal-based
integration with more quadrature points than its modal counterpart is utilized, similar to~\cite{Gassner2009}.
It is evident that this offers the potential for a reduction in computational cost by exploiting tensor-product properties, while simultaneously avoiding excessively small time steps.
The modal approximation is given as
\begin{align}
  \consh^{(\iel)}(\refpos,t) = \sum_{i=1}^{M(\ppn)} \tilde{\cons}^{(\iel)}_{i}(t)
  \testfunc_i(\refpos) \in \R^{\nvar\times M(\ppn)}
\end{align}
with the modal expansion coefficients $\tilde{\cons}^{(\iel)}(t)$ in the $\iel$-th element and the orthonormal basis
$\{\testfunc_i\}_{i\in\{1:M(\ppn)\}}$, defined in~\cref{sec:theory}.
The mapping from modal to nodal space can be written using the generalized Vandermonde matrix $\Vm \in \R^{\nq \times M(\ppn)}$ as
\begin{align}
  \cons^{(\iel)}(t) = \Vm \tilde{\cons}^{(\iel)}(t), \quad \Vm_{ij} = \testfunc_j(\refpos_i).
\end{align}
The modal representation is obtained by solving the following projection problem
\begin{align}
  \Mm^{(\iel)} \tilde{\cons}^{(\iel)} = \Vm^\transpose \Wm \diagJref \diagJ^{(\iel)} \cons^{(\iel)}
\end{align}
with the physical mass matrix $\Mm^{(\iel)} = \Vm^\transpose \Wm \diagJModal^{(\iel)} \Vm$.
Following~\cite{Chan2019a}, the diagonal matrix of the determinant of the Jacobian $\diagJ^{(\iel)}$ from the computational domain to the polytopal
reference space is approximated by the projection of $\diagJ^{(\iel)}$ onto $\P(\refraumcol)$ to ensure discrete
conservation, resulting in $\diagJModal^{(\iel)} = (\widetilde{\Mm})^{-1} \Vm^\transpose \Wm \diagJ^{(\iel)}$ with $\widetilde{\Mm} = \Vm^\transpose \Wm \Vm$.
The mass matrix $\Mm^{(\iel)}$ is dense for non-affine mappings from the reference to the physical element, leading to elevated memory
requirements and an augmented computational effort. This can be avoided through the use of the weight-adjusted approximation of the mass matrix inverse proposed
by~\cite{Chan2019a}, written as
\begin{align}
  (\Mm^{(\iel)})^{-1} \approx (\widetilde{\Mm})^{-1} \Vm^\transpose \Wm (\diagJModal^{(\iel)})^{-1} \Vm (\widetilde{\Mm})^{-1}.
\end{align}
Finally, the time derivative of the weight-adjusted modal formulation is
\begin{align}
  \tilde{\cons}^{(\iel)}_t = (\widetilde{\Mm})^{-1} \Vm^\transpose \Wm (\diagJModal^{(\iel)})^{-1} \Vm
  (\widetilde{\Mm})^{-1} \Vm^\transpose \mathcal{R}^{(\iel)}(t).
  \label{eq:dgsem_modal}
\end{align}
The matrix $\widetilde{\Mm}$ reduces to a unit matrix if an orthonormal basis function $\testfunc$ and an integration rule with at least order $2\ppn$ is chosen.

\subsection{Free-stream Preservation, Discrete Entropy Conservation and Stability}
In this section, we prove that the entropy-stable \DGSEM~in \cref{eq:dgsem_modal} with the residuum as defined in \cref{eq:dgsem_lifting} is free-stream preserving and discretely entropy conservative/stable.
It is important to note that entropy conservation/stability is demonstrated exclusively for the hyperbolic part.
The entropy stability of the viscous operator has been proven in the study by~\cite{Dalcin2019}.
The proof for entropy conservation/stability of the non-unique quadrature nodes on triangular facets can be derived by directly following  \cite{Chan2021} and the procedure outlined below.
A numerical demonstration is provided in~\cref{sec:validation}.
\begin{theorem}
  Assume a domain with periodic boundary conditions and a constant solution in each element $\iel=1,...,\nel$, given as $\cons^{(\iel)} = \mathbf{C} \in \R^{\nq}$.
  Let the numerical fluxes $f^\ast$ and two-point fluxes $\fsharp$ be consistent.
  Suppose that the metric identities are satisfied, $\divXI \Jm = \nabla_\eta \cdot \lr{\Jmref^{-1} \Jm} = \Null$, and the normal vectors are computed using~\cref{eq:nanson}, such that the decoupled SBP operator in~\cref{eq:sbp} satisfies the discrete metric identities $\Qm ~ \mathbf{1} = \mathbf{0}$.
  Let the extrapolation operator $\Vm_f$ be exact for a constant state and the quadrature rule is at least of degree $2\ppn$ to
  ensure the exactness of the volume quadrature.
  Then, the semi-discrete \DGSEM~in \cref{eq:dgsem_modal} with the residuum $\mathcal{R}^{(\iel)}(t)$ as defined in \cref{eq:dgsem_lifting} and \cref{eq:lifting} is free-steam preserving, i.e., discretely conservative, satisfying
  \begin{align}
    \Wm \diagJref \diagJ^{(\iel)} \cons^{(\iel)}_t =& - \sum_{\ifa=1}^{\Nf} (\Vm_f^{(\ifa)})^\transpose \Wm_f \diagJref^{(\ifa)} \diagJ^{(\iel,\ifa)} f^\ast(\qentropyproj^{(\ifa),+} , \qentropyproj^{(\ifa),-}; \normalvec^{(\ifa)}).
  \end{align}
  \label{lemma1}
\end{theorem}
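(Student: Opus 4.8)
The plan is to substitute the hypotheses of the theorem directly into the residual $\mathcal{R}^{(\iel)}$ defined in \cref{eq:dgsem_lifting} and to check, term by term, that every contribution other than the surface numerical flux drops out; the claimed identity then follows because \cref{eq:dgsem_lifting} reads $\Wm\diagJref\diagJ^{(\iel)}\cons_t^{(\iel)} = \mathcal{R}^{(\iel)}$ verbatim (and, since a constant is exactly representable and $\widetilde{\Mm} = \Iunit$ under the $2\ppn$-exactness hypothesis, the modal reformulation \cref{eq:dgsem_modal} introduces no approximation in this situation). First I would record the consequences of $\cons^{(\iel)} = \mathbf{C}$: the nodal entropy variables $\qentropyvar^{(\iel)} = \entropyvar(\mathbf{C})$ are constant, and because $\Vm_f$ is exact on constants, $\entropyvarsurf = \Vm_f\,\entropyvar(\mathbf{C}) = \entropyvar(\mathbf{C})$, so the entropy-projected traces satisfy $\qentropyproj^{(\ifa),\pm} = \cons(\entropyvar(\mathbf{C})) = \mathbf{C}$ and coincide on both sides of every facet. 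Consistency of $\fsharp$ then collapses every two-point flux appearing in $\mathcal{Q}^{(\iel)}$ and in $\mathcal{R}^{(\iel,\ifa)}$ to the physical flux $\Fc^{(p)}(\mathbf{C})$, and consistency of $f^\ast$ gives $f^\ast(\qentropyproj^{(\ifa),+},\qentropyproj^{(\ifa),-};\normalvec^{(\ifa)}) = \Fc(\mathbf{C})\cdot\normalvec^{(\ifa)}$.

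Next I would dispatch the second-order part. Feeding the constant primitive state into \cref{eq:lifting}, the volume term $-\sum_{p}\Wm\Dm^\transpose(\adj\Jmref)^{(p)}\avg{(\M^{(\iel)})^\transpose}\cons^{\text{prim}}$ and the interface term $\sum_{\ifa}(\Vm_f^{(\ifa)})^\transpose\Wm_f\diagJref^{(\ifa)}\diagJ^{(\iel,\ifa)}\avg{\cons^{\text{prim},(\ifa)}\cdot\normalvec^{(\ifa)}}$ are exactly the two sides of the discrete divergence theorem for a constant field; they cancel because the discrete metric identities $\divXI\adj\Jm = \Null$ hold (conservative curl form, equivalently the surface block of $\Qm\,\mathbf{1} = \mathbf{0}$) and the volume quadrature is exact to degree $2\ppn$. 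Hence $\mathbf{g} = \Null$, so $\stress = \Null$ and $\nabla T = \Null$, i.e.\ $\fphys_v(\cons,\mathbf{g}) = \Null$ pointwise; consequently the viscous volume term and the viscous interface average in \cref{eq:dgsem_lifting} vanish identically.

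The core of the argument is then $\mathcal{Q}^{(\iel)} = \Null$ and $\mathcal{R}^{(\iel,\ifa)} = \Null$ for every face. With all two-point fluxes equal to the constant $\Fc^{(p)}(\mathbf{C})$, the flux-differencing volume operator in \cref{eq:esdgsem_matrix}, $\sum_{p}\lr{2\Qm\circ\fphysref^{(p)}}\mathbf{1}$ restricted to the volume/face blocks, factors into $\Fc^{(p)}(\mathbf{C})$ times a purely metric object obtained by replacing each flux value with the averaged contravariant metric coefficient $(\adj\Jmref)^{(p)}\avg{(\M^{(\iel)})^\transpose}_{ij}$. Using $\Qm^\transpose + \Qm = \mathrm{diag}(\Null,\Wm_f)$ together with $\Qm\,\mathbf{1} = \mathbf{0}$, this reduces to one half of a discrete divergence of the contravariant metric terms in the collapsed frame, which vanishes precisely by the hypothesis $\nabla_\eta\cdot\lr{\Jmref^{-1}\Jm} = \Null$. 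For the facet correction $\mathcal{R}^{(\iel,\ifa)}$ I would use that on a constant state the extrapolated adjoint $\adj\Jm^{(\iel,\ifa)} = \Vm_f\,\adj\Jm^{(\iel)}$ agrees with the face value of $\avg{(\M^{(\iel)})^\transpose}$, that $\normalvec^{(\ifa)}$ is tied to $\normalvecref^{(\ifa)}$ through Nanson's formula \cref{eq:nanson}, and that $\Vm_f$ is exact on constants; the three lines defining $\mathcal{R}^{(\iel,\ifa)}$ then telescope to zero. Substituting $\mathcal{Q}^{(\iel)} = \Null$, the vanishing of the viscous terms, and $\mathcal{R}^{(\iel,\ifa)} = \Null$ into $\mathcal{E}^{(\iel)}$ and into \cref{eq:dgsem_lifting} leaves $\mathcal{R}^{(\iel)} = -\sum_{\ifa=1}^{\Nf}(\Vm_f^{(\ifa)})^\transpose\Wm_f\diagJref^{(\ifa)}\diagJ^{(\iel,\ifa)}f^\ast(\qentropyproj^{(\ifa),+},\qentropyproj^{(\ifa),-};\normalvec^{(\ifa)})$, which is the asserted identity; when the constant is the same in all elements and the boundary conditions are periodic, one more application of the discrete divergence theorem, $\sum_{\ifa}(\Vm_f^{(\ifa)})^\transpose\Wm_f\diagJref^{(\ifa)}\diagJ^{(\iel,\ifa)}\normalvec^{(\ifa)} = \Null$, shows $\cons_t^{(\iel)} = \Null$, i.e.\ true free-stream preservation.

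The hard part will be the curvilinear metric bookkeeping in the third paragraph: one must follow how the Duffy-transformation adjoint $(\adj\Jmref)^{(p)}$ multiplies the physical metric $\avg{(\M^{(\iel)})^\transpose}$ inside the Hadamard product with $\Qm$, handle the diagonal contribution of the two-point average $\avg{\cdot}_{ij}$ at mixed volume/face index pairs, and verify that the split-form volume operator and the facet-correction operator each telescope to a discrete divergence of the metric terms — so that it is exactly the collapsed-coordinate metric identity $\nabla_\eta\cdot(\Jmref^{-1}\Jm) = \Null$, and not merely the conservative curl form on the physical mapping alone, that is being invoked. The remaining ingredients — the entropy-variable/entropy-projection algebra, the consistency reductions, and the lifting argument for $\mathbf{g}$ — are routine.
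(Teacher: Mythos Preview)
Your proposal is correct and follows essentially the same strategy as the paper: the discrete metric identities together with $\Qm\,\mathbf{1}=\mathbf{0}$ annihilate the flux-differencing volume contribution, the lifting equation trivially yields $\mathbf{g}=\Null$ for a constant state so the viscous terms drop, and only the surface numerical flux survives. The difference is one of presentation rather than substance: you work directly at the nodal level of \cref{eq:dgsem_lifting}, carry out the entropy-projection and consistency reductions explicitly, and argue separately that $\mathcal{Q}^{(\iel)}=\Null$ and that the three lines of $\mathcal{R}^{(\iel,\ifa)}$ telescope; the paper instead collapses \cref{eq:dgsem_lifting} back to the compact form \cref{eq:esdgsem_matrix}, invokes $\Qm\,\mathbf{1}=\mathbf{0}$ in one stroke, and then passes through the modal formulation, citing a lemma of Chan (2019) for the equivalence between the weight-adjusted modal time derivative and the nodal quantity $\mathbf{1}^\transpose\Wm\diagJref\diagJ^{(\iel)}\cons_t^{(\iel)}$. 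Your route is more self-contained and actually delivers the pointwise (vector-valued) identity asserted in the theorem, whereas the paper's argument, as written, lands on the contracted (scalar) conservation statement before appealing to periodicity; both are valid, and the ``hard part'' you flag --- the collapsed-coordinate metric bookkeeping inside the Hadamard product --- is precisely what the paper sweeps into the phrase ``the discrete metric identities and Gauss' theorem.''
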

\begin{proof}
  Exploiting that the discrete metric identities, $\Dm \cdot \lr{\Jmref^{-1} \Jm} = \Null$, are satisfied and invoking the conservation property of the numerical interface fluxes, the lifted gradients reduce to $\mathbf{g} = \Null$.
  Therefore, \cref{eq:dgsem_lifting} simplifies to \cref{eq:esdgsem_discrete} which is equivalent to \cref{eq:esdgsem_matrix}.
  With the property $\Qm ~ \mathbf{1} = \mathbf{0}$, the discrete metric identities, and Gauss' theorem, \cref{eq:esdgsem_matrix} can be written as
  \begin{align*}
    \Mm^{(\iel)} \tilde{\cons}^{(\iel)}_t = -& (\mathbf{1}^{(\nq)})^\transpose \sum_{\ifa=1}^{\Nf} (\Vm_f^{(\ifa)})^\transpose \Wm_f \diagJref^{(\ifa)} \diagJ^{(\iel,\ifa)} f^\ast(\qentropyproj^{(\ifa),+} , \qentropyproj^{(\ifa),-}; \normalvec^{(\ifa)}).
  \end{align*}
  with $\Vm~\mathbf{1}^{(M(\ppn))} = \mathbf{1}^{(\nq)}$ for a constant state, and $\mathbf{1}^{(M(\ppn))} \in \R^{(M(\ppn))}$ is a vector composed of the modal expansion coefficients.
  This constitutes a statement of local conservation in regard to the weight-adjusted mass matrix.
  From \citep[Lemma 2]{Chan2019a}, the exactness of the volume quadrature yields
  \begin{align*}
    \Mm^{(\iel)} \tilde{\cons}^{(\iel)}_t = (\mathbf{1}^{(\nq)})^\transpose \Wm \diagJref \diagJ^{(\iel)} \cons^{(\iel)}_t
  \end{align*}
  Finally, global conservation holds on the discrete level by invoking the conservation property of the numerical interface fluxes.
  This completes the proof of \Cref{lemma1}.
\end{proof}

\begin{theorem}
  Let the numerical fluxes $f^\ast$ be consistent, conservative and entropy-stable, given as
  \begin{align*}
    \lr{\qentropyproj^{(\ifa),+} - \qentropyproj^{(\ifa),-}}^\transpose f^\ast(\qentropyproj^{(\ifa),+} , \qentropyproj^{(\ifa),-}; \normalvec^{(\ifa)}) \leq \lr{\boldsymbol{\Psi}(\qentropyproj^{(\ifa),+}) - \boldsymbol{\Psi}(\qentropyproj^{(\ifa),-})} \cdot \normalvec^{(\ifa)}
  \end{align*}
  with the entropy flux potential $\boldsymbol{\Psi} = \entropyvar^\transpose \fsharp - \fentropyref$.
  Suppose the two-point fluxes $\fsharp$ are consistent, symmetric and entropy conservative according to Tadmor's condition~\cite{Tadmor1987}.
  Assume that the metric identities are satisfied on the discrete level and that the normal vectors are computed using~\cref{eq:nanson}.
  Then, the semi-discrete \DGSEM~in \cref{eq:dgsem_modal} with the spatial operator defined in \cref{eq:esdgsem_matrix} is discretely entropy-stable, satisfying the following entropy balance
  \begin{align}
    \mathbf{1}^\transpose \Wm \diagJref \diagJ \mathbf{\entropy}^{(\iel)}_t \leq \sum_{\ifa=1}^{\Nf} \mathbf{1}^\transpose \Wm_f \diagJref^{(\ifa)} \diagJ^{(\iel,\ifa)} \lr{ \boldsymbol{\Psi} (\qentropyproj^{(\ifa)}) - f^\ast(\qentropyproj^+,\qentropyproj^-;\normalvec^{(\ifa)})}
  \end{align}
  with the vector of the entropy $\mathbf{\entropy}^{(\iel)} \in \R^\nq$.
  This entropy balance reduces to an equality for an entropy conservative numerical flux function.
  \label{lemma2}
\end{theorem}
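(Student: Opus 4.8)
The plan is to contract the semi-discrete residual \cref{eq:esdgsem_matrix} with the discrete entropy variables, exploit the flux-differencing structure to telescope the volume term into boundary contributions, and close the estimate with the assumed entropy properties of the interface flux; the viscous part is not treated here (it is entropy stable by~\cite{Dalcin2019}, as noted). As a preliminary, exactly as in the proof of \Cref{lemma1} and following \citet{Chan2019a}, under the stated quadrature exactness one has $\widetilde{\Mm}=\mathbf{I}$ and $\diagJModal^{(\iel)}$ is the $L_2$-projection of $\diagJ^{(\iel)}$, so the weight-adjusted modal update \cref{eq:dgsem_modal} is compatible with the nodal entropy rate $\mathbf 1^\transpose\Wm\diagJref\diagJ^{(\iel)}\mathbf{\entropy}^{(\iel)}_t$ represented by \cref{eq:esdgsem_matrix}; hence it suffices to contract \cref{eq:esdgsem_matrix} with the entropy variables. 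I would left-multiply its time-derivative term by the extended row vector $\lr{(\qentropyvar^{(\iel)})^\transpose,\ (\entropyvarsurf^{(\iel)})^\transpose}$ with $\entropyvarsurf^{(\iel)}=\Vm_f\qentropyvar^{(\iel)}$: since $\Wm$, $\diagJref$, $\diagJ^{(\iel)}$ are diagonal and $\entropyvar^\transpose\cons_t=\entropy_t$, this produces exactly the left-hand side of the claimed balance. Note the same $\Vm_f$ that extrapolates the solution extrapolates the entropy variables, so this extended test vector is consistent with the block structure $[\mathbf{I}\ \Vm_f^\transpose]$ of the volume operator; this is precisely why the entropy-projected traces live at the faces.

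The heart of the argument is the volume term. I would expand the Hadamard product $2\Qm\circ\fphysref^{(p)}$ and split $\Qm=\tfrac12(\Qm-\Qm^\transpose)+\tfrac12\,\mathrm{diag}(\Null,\Wm_f)$ using the SBP property. Contracting the skew-symmetric part with the symmetric, consistent two-point flux and invoking Tadmor's entropy-conservation condition, $\lr{\entropyvar_i-\entropyvar_j}^\transpose\fsharp^{(p)}(\entropyvar_i,\entropyvar_j)$ being a discrete difference of the $p$-th entropy-potential component, turns the double sum into a telescoping sum; the geometric factors $(\adj\Jmref)^{(p)}\avg{(\M^{(\iel)})^\transpose}$, frozen as arithmetic averages, pass through this manipulation only because the discrete metric identities $\Qm\,\mathbf 1=\mathbf 0$ (guaranteed by the conservative-curl metrics and Nanson's formula \cref{eq:nanson}) make their discrete divergence vanish, so no spurious volume source is produced. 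What survives, together with the $\mathrm{diag}(\Null,\Wm_f)$ block and the pointwise identities $\entropyvar(\cons)^\transpose\Fc(\cons)-\fentropyref(\cons)=\boldsymbol{\Psi}(\cons)$ and $\fsharp(\cons,\cons)=\Fc(\cons)$, is a pure surface term $\sum_{\ifa}\mathbf 1^\transpose\Wm_f\diagJref^{(\ifa)}\diagJ^{(\iel,\ifa)}\,\fentropyref(\qentropyproj^{(\ifa)})\cdot\normalvec^{(\ifa)}$, since $\qentropyproj^{(\ifa)}=\cons(\entropyvarsurf^{(\ifa)})$ is exactly the state whose entropy variables are the extrapolated ones~\cite{Chan2018}.

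It then remains to handle the surface sum in \cref{eq:esdgsem_matrix}. Contracting the correction term $-\fphysref(\qentropyproj^{(\ifa)})\cdot\normalvecref^{(\ifa)}$ with $\entropyvarsurf^{(\ifa)}$ and using $\fphysref(\qentropyproj^{(\ifa)})\cdot\normalvecref^{(\ifa)}=\diagJref^{(\ifa)}\diagJ^{(\iel,\ifa)}\lr{\Fc(\qentropyproj^{(\ifa)})\cdot\normalvec^{(\ifa)}}$ contributes $+\sum_{\ifa}\mathbf 1^\transpose\Wm_f\diagJref^{(\ifa)}\diagJ^{(\iel,\ifa)}\,\entropyvar(\qentropyproj^{(\ifa)})^\transpose\lr{\Fc(\qentropyproj^{(\ifa)})\cdot\normalvec^{(\ifa)}}$, which combines with the volume boundary term to give exactly $\sum_{\ifa}\mathbf 1^\transpose\Wm_f\diagJref^{(\ifa)}\diagJ^{(\iel,\ifa)}\,\boldsymbol{\Psi}(\qentropyproj^{(\ifa)})\cdot\normalvec^{(\ifa)}$; the $f^\ast$ part contributes $-\sum_{\ifa}\mathbf 1^\transpose\Wm_f\diagJref^{(\ifa)}\diagJ^{(\iel,\ifa)}\,\entropyvar(\qentropyproj^{(\ifa),+})^\transpose f^\ast(\qentropyproj^+,\qentropyproj^-;\normalvec^{(\ifa)})$. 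Collecting everything yields the element-wise entropy identity whose $\ifa$-th face term is $\boldsymbol{\Psi}(\qentropyproj^{(\ifa),+})\cdot\normalvec^{(\ifa)}-\entropyvar(\qentropyproj^{(\ifa),+})^\transpose f^\ast$; the assumed consistency and entropy stability of $f^\ast$ then bound this by the claimed right-hand side, with equality precisely when $f^\ast$ is entropy conservative (in which case the telescoping above is also exact), while consistency of $\fsharp$ and $f^\ast$ is used to discard the degenerate diagonal ($i=j$) terms of the flux-differencing sums. Summing over elements and invoking Tadmor's identity at each interface recovers the global entropy decay for periodic boundaries, although that step is not needed for the stated per-element balance.

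Two places deserve care. First, on collapsed triangular faces the quadrature nodes of two neighbouring elements need not coincide, so the interface contraction above is literally meaningful only after transferring the entropy-projected traces to the common \citet{Xiao2010} triangle quadrature as described in \cref{sec:methods}; that this transfer preserves both conservation and the telescoping is handled exactly as in \citet{Chan2021}, which I would cite rather than reprove. Second — and this is where I expect the real work — the bookkeeping of the metric factors in the telescoping step must be done with care: one has to verify that the hybridized $\Qm$ of \cref{eq:sbp} inherits $\Qm\,\mathbf 1=\mathbf 0$ from $\Wm\Dm\,\mathbf 1=\mathbf 0$ together with exactness of $\Vm_f$ on constants and \cref{eq:nanson}, and that after summation over the contravariant directions $p$ the frozen geometric averages collapse to the single surface factor $\diagJref^{(\ifa)}\diagJ^{(\iel,\ifa)}\normalvec^{(\ifa)}$; keeping the $\normalvecref^{(\ifa)}$ versus $\normalvec^{(\ifa)}$ distinction and the sign of every boundary term straight is routine but is the likeliest source of error.
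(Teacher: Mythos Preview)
Your proposal is correct and follows essentially the same route as the paper: contract the scheme with the (projected) entropy variables, split the hybridized operator via $\Qm+\Qm^\transpose=\mathrm{diag}(\Null,\Wm_f)$, telescope the skew-symmetric part using Tadmor's condition together with the discrete metric identities, and combine the remaining boundary contributions with the interface flux term. The only organisational difference is that the paper carries out the contraction in the modal weight-adjusted form, testing \cref{eq:esdgsem_weigthadjusted} with the projected entropy variables $\widetilde{\entropyvar}^{(\iel)}=(\Mm^{(\iel)})^{-1}\Vm^\transpose\Wm\diagJref\diagJ^{(\iel)}\entropyvar^{(\iel)}$ and invoking \cite{Chan2019a} to obtain the entropy time derivative, whereas you dispatch this equivalence in a preliminary remark and then work with the nodal form; and the paper groups the $\mathrm{diag}(\Null,\Wm_f)$ block with the surface correction $-\fphysref(\qentropyproj^{(\ifa)})\cdot\normalvecref^{(\ifa)}$ (so that the skew part alone already yields $\boldsymbol{\Psi}$), while you group the diag block with the skew part first and then add the surface correction --- these are equivalent rearrangements, and your own caveat about sign bookkeeping is apt here.
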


\begin{proof}
  Focusing on the hyperbolic operator, entropy conservation/stability of the final \DGSE~operator can be analytically proven by following the approach described in~\cite{Chan2018,Chan2019a}, wherein \cref{eq:dgsem_modal} is first multiplied by the weight-adjusted approximation of the mass matrix $\Mm^{(\iel)}$ which results in the weight-adjusted approximation of \cref{eq:esdgsem_matrix}, given as
  \begin{align}
    \label{eq:esdgsem_weigthadjusted}
    \Mm^{(\iel)} \tilde{\cons}^{(\iel)}_t
    + \sum_{p=1}^{d} \left[\begin{matrix} \Vm^\transpose & \widetilde{\Vm}_f^{\transpose} \end{matrix} \right] \lr{2 \mathbf{Q} \circ
    \fphysref^{(p)}(\qentropyvar^{(\iel)})} \mathbf{1} & \\
    + \sum_{\ifa=1}^{\Nf} (\widetilde{\Vm}_f^{(\ifa)})^\transpose \Wm_f
    \lr{\diagJref^{(\ifa)} \diagJ^{(\iel,\ifa)} f^\ast(\qentropyproj^+,\qentropyproj^-;\normalvec^{(\ifa)}) -
  (\fphysref(\qentropyproj^{(\ifa)}) \cdot \normalvecref^{(\ifa)})} &= \Null. \nonumber.
  \end{align}
  Here, $\widetilde{\Vm}_f^\transpose = \Vm^\transpose \Vm_f^\transpose \in \R^{M(\ppn) \times \nf}$ denotes the transpose of the modal interpolation matrix to the surface quadrature nodes.
  Secondly, \cref{eq:esdgsem_weigthadjusted} is multiplied by the vector of the weight-adjusted projection of the entropy variables $\widetilde{\entropyvar}^{(\iel)} = (\Mm^{(\iel)})^{-1} \Vm^\transpose \Wm \diagJref \diagJ^{(\iel)} \entropyvar^{(\iel)}$.
  Using similar arguments as in \cite{Chan2019a}, this yields the following expression for the first term on the left-hand side
  \begin{align}
    (\widetilde{\entropyvar}^{(\iel)})^\transpose \Mm^{(\iel)} \tilde{\cons}^{(\iel)}_t = \mathbf{1}^\transpose \Wm \diagJref \diagJ \mathbf{\entropy}^{(\iel)}_t.
  \end{align}
  The second term can be rewritten as
  \begin{align}
    \label{eq:esdgsemvolume}
    &- (\widetilde{\entropyvar}^{(\iel)})^\transpose \sum_{p=1}^{d} \left[\begin{matrix} \Vm^\transpose ~ \widetilde{\Vm}_f^{\transpose} \end{matrix} \right] \lr{2 \mathbf{Q} \circ \fphysref^{(p)}(\qentropyvar^{(\iel)})} \mathbf{1} \\
    = &- \sum_{p=1}^{d} (\check{\entropyvar}^{(\iel)})^\transpose \lr{ (( \adj \Jmref)^{(p)})^\transpose \Wm_f \circ (\avg{(\M^{(\iel)})^\transpose} \fsharp^{(p)}(\qentropyvar^{(\iel)}))} \mathbf{1} \nonumber \\
    &- \sum_{p=1}^{d} (\check{\entropyvar}^{(\iel)})^\transpose \lr{ (( \adj \Jmref)^{(p)})^\transpose \lr{ \Qm - \Qm^\transpose } \circ (\avg{(\M^{(\iel)})^\transpose} \fsharp^{(p)}(\qentropyvar^{(\iel)}))} \mathbf{1}, \nonumber
  \end{align}
  where the definitions of the \SBP~operator, cf.~\cref{eq:sbp}, and the contravariant fluxes, see \cref{eq:fcontra}, are exploited.
  Here, $\check{\entropyvar}^{(\iel)} = \left[\begin{matrix} \Vm^\transpose ~ \widetilde{\Vm}_f^{\transpose} \end{matrix} \right] \widetilde{\entropyvar}^{(\iel)}$.
  Following \cite{Chan2019a} and using the metric identities, the second term in \cref{eq:esdgsemvolume} reduces to
  \begin{align}
    &- \sum_{p=1}^{d} (\check{\entropyvar}^{(\iel)})^\transpose \lr{ (( \adj \Jmref)^{(p)})^\transpose \lr{ \Qm - \Qm^\transpose } \circ (\avg{(\M^{(\iel)})^\transpose} \fsharp^{(p)}(\qentropyvar^{(\iel)}))} \mathbf{1} \nonumber \\
    = & - \sum_{\ifa=1}^{\Nf} \mathbf{1}^\transpose \Wm_f \diagJref^{(\ifa)} \diagJ^{(\iel,\ifa)} \boldsymbol{\Psi} (\qentropyproj^{(\ifa)}).
  \end{align}
  Similar to \cite{Chan2019a}, summation of the third term on the left-hand side of \cref{eq:esdgsem_weigthadjusted} multiplied by the projected entropy variables and the first term on the right-hand side of \cref{eq:esdgsemvolume} yields
  \begin{align}
    & \sum_{\ifa=1}^{\Nf} (\check{\entropyvar}_f^{(\ifa)})^\transpose \Wm_f \lr{\diagJref^{(\ifa)} \diagJ^{(\iel,\ifa)} f^\ast(\qentropyproj^+,\qentropyproj^-;\normalvec^{(\ifa)}) - (\fphysref(\qentropyproj^{(\ifa)}) \cdot \normalvecref^{(\ifa)})} \nonumber \\
    &- \sum_{p=1}^{d} (\check{\entropyvar}^{(\iel)})^\transpose \lr{ (( \adj \Jmref)^{(p)})^\transpose \Wm_f \circ (\avg{(\M^{(\iel)})^\transpose} \fsharp^{(p)}(\qentropyvar^{(\iel)}))} \mathbf{1} \nonumber \\
    =& \sum_{\ifa=1}^{\Nf} (\check{\entropyvar}_f^{(\ifa)})^\transpose \Wm_f \diagJref^{(\ifa)} \diagJ^{(\iel,\ifa)} f^\ast(\qentropyproj^+,\qentropyproj^-;\normalvec^{(\ifa)}).
  \end{align}
  This completes the proof of \Cref{lemma2}.
\end{proof}

\section{Validation and Application}%
\label{sec:validation}

In the following, the implementation of the proposed entropy-stable DGSE scheme is validated based on numerical experiments, starting with the transportation of a constant solution before moving on to a generic manufactured solution on curvilinear grids and the weakly compressible Taylor--Green vortex.
This allows validation of the free-stream preservation, convergence rates, and entropy conservation/stability of the DGSEM.
It is important to acknowledge that the objective of this paper is not to provide a comparison of DGSEM with other related DG schemes, such as multidimensional DG.
Such a comparison of the efficiency and accuracy of the tensor-product DGSEM and the multidimensional DG operator is extensively given in~\cite{Montoya2024b} for triangular and tetrahedral elements.
This section concludes with the simulation of the flow around the common research model to demonstrate the applicability of the proposed scheme to real-world problems.
It has to be noted that all quantities are non-dimensional.

\subsection{Free-Stream Preservation}

The free-stream preservation property, i.e., conservation of a constant state over time, is evaluated for an arbitrary constant state.
Here, the state $\cons$ is chosen with $\rho=1$, $\vel=0.3$ and $p=17.857$, which is advanced for ten time steps on a computational domain $\Omega \in [0,1]^3$ discretized by \num{248} polytopal elements.
The mesh is constructed by starting with a $8^3$ hexahedral elements grid, where each hexahedral element is either kept as one
hexahedron (\texttt{HEXA}) or split into two prisms (\texttt{PRIS}), six pyramids (\texttt{PYRA}), or six tetrahedrons
(\texttt{TETRA}), as illustrated in~\cref{fig:split_hexa}.
To circumvent the generation of mixed element interfaces, a pyramid is, if necessary, divided into two tetrahedral elements.
This results for the given configuration in a total of four zones where each zone consists of \num{16} hexahedrons, \num{32} prisms,
\num{88} pyramids, or \num{112} tetrahedrons, respectively.
The ensuing numerical test cases will employ the same procedure for mesh generation.
\begin{figure}
  \includegraphics[width=\linewidth]{./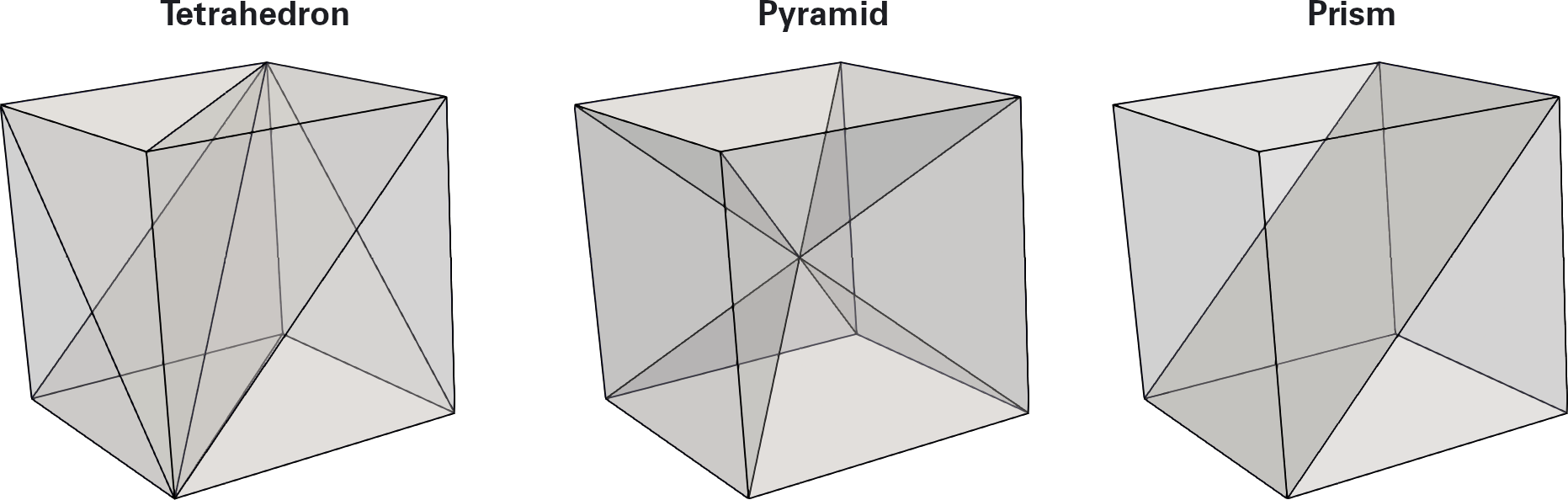}
  \caption{Exemplary split of one hexahedral element into two prisms, six pyramids, or six tetrahedrons (from right to left).}
  \label{fig:split_hexa}
\end{figure}
The results in~\cref{tab:validation:freestream} highlight the free-stream preservation property of the DGSEM.
Similar results are obtained on a two-dimensional computational domain, where the error is in the order of machine precision.
\begin{table}[htb!]
  \centering
  \sisetup{output-exponent-marker=\ensuremath{\mathrm{E}},round-mode=places,round-precision=5}
  \resizebox{\textwidth}{!}{\begin{tabular}{c|ccccc}
                & $\rho$                & $\rho \vel[1]$        & $\rho \vel[2]$        & $\rho \vel[3]$        & $\rho e$              \\ \hline
      $\ppn=3$  & \num{1.969789014E-15} & \num{1.226776505E-13} & \num{2.257047358E-13} & \num{1.610594680E-13} & \num{4.647459970E-13} \\
      $\ppn=4$  & \num{4.305873433E-15} & \num{9.175273378E-14} & \num{1.154184587E-13} & \num{1.310743385E-13} & \num{3.002585069E-13} \\
      $\ppn=5$  & \num{8.894536355E-15} & \num{9.569395831E-14} & \num{1.259617535E-13} & \num{9.444142757E-14} & \num{2.902739569E-13} \\\hline
  \end{tabular}}
  \caption{Free-stream preservation: $L_2$-error of $\cons$.}%
  \label{tab:validation:freestream}
\end{table}

\subsection{Experimental Order of Convergence}
In the following, the $h$- and $p$-convergence properties of the entropy-stable DGSEM on curvilinear meshes are investigated on a
computational domain $\Omega \in [0,1]^d$ with $d=2,3$ using the method of manufactured solutions.
Following~\cite{Hindenlang2012}, the exact solution is assumed to be of the form
\begin{align}
  \rho = 2+0.1\sin(2\pi(\pos[1]+\pos[2]+\pos[3]-0.3t)), \ \rho \vel = \rho, \ \rho e = (\rho)^2.
\end{align}
In the $p$-convergence survey, the grid size is fixed to $4^3$ hexahedral elements in each direction and the polynomial degree $\ppn$ is varied.
For the $h$-convergence study, $\Omega$ is discretized using $1^3$ to $16^3$ hexahedral elements in each direction and $\ppn=4$.
For both investigations, similar to above, each hexahedral element is either kept as one hexahedron (\texttt{HEXA}) or split into two prisms (\texttt{PRIS}), six pyramids (\texttt{PYRA}), or six tetrahedrons (\texttt{TETRA}).
In two dimensions, each quadrilateral element is kept as one quadrilateral (\texttt{QUAD}) or divided into two triangular elements (\texttt{TRIA}).
In addition, a mixed elements grid (\texttt{MIX}) composed of all four element types is considered, resulting in a total of \num{32}
up to \num{15488} polytopal elements during the $h$-convergence study, while the $p$-convergence study is conducted on a \num{248}
element mesh, cf.~\cref{fig:convtest_tgv} (left).
Following~\cite{Chan2019}, the mesh is sinusoidally deformed by perturbing the node positions as
\begin{align*}
  \mathrm{2D}:& \ \pos = \left[ \begin{matrix}
    \pos[1] + \epsilon L
    \cos\lr{\nicefrac{\pi}{L}(\pos[1]-\nicefrac{1}{2})}\cos\lr{\nicefrac{3\pi}{L}(\pos[2]-\nicefrac{1}{2})}\\
    \pos[2] + \epsilon L
    \sin\lr{\nicefrac{4\pi}{L}(\pos[1]-\nicefrac{1}{2})}\cos\lr{\nicefrac{\pi}{L}(\pos[2]-\nicefrac{1}{2})}\\
  \end{matrix} \right],\\[2ex]
  \mathrm{3D}:& \ \pos = \left[ \begin{matrix}
    \pos[1] + \epsilon L
    \cos\lr{\nicefrac{\pi}{L}(\pos[1]-\nicefrac{1}{2})}\sin\lr{\nicefrac{4\pi}{L}(\pos[2]-\nicefrac{1}{2})}\cos\lr{\nicefrac{\pi}{L}(\pos[3]-\nicefrac{1}{2})}\\
    \pos[2] + \epsilon L
    \cos\lr{\nicefrac{3\pi}{L}(\pos[1]-\nicefrac{1}{2})}\cos\lr{\nicefrac{\pi}{L}(\pos[2]-\nicefrac{1}{2})}\cos\lr{\nicefrac{\pi}{L}(\pos[3]-\nicefrac{1}{2})}\\
    \pos[3] + \epsilon L \cos\lr{\nicefrac{\pi}{L}(\pos[1]-\nicefrac{1}{2})}\cos\lr{\nicefrac{2\pi}{L}(\pos[2]-\nicefrac{1}{2})}\cos\lr{\nicefrac{\pi}{L}(\pos[3]-\nicefrac{1}{2})}
  \end{matrix} \right]
\end{align*}
where $L=1$ denotes the length of the domain and $\epsilon$ chosen as $\epsilon=\nicefrac{1}{16}$, following~\cite{Montoya2024b}.
The curvilinear elements are approximated by a polynomial degree of $\ppngeo = 2$.
The chosen error norm is the discrete $L_2$-error of the density at $t=0.1$.
The results in~\cref{fig:validation:conv} highlight the spatial convergence properties of the entropy-stable DGSEM on different curvilinear elements.
Note that additionally uncurved pyramids are considered, since convergence cannot be guaranteed for curved pyramids.
This is, however, a common phenomenon in the literature~\cite{Chan2017}.
In order to avoid a stagnation in convergence on the hybrid grid, the node position of the pyramids are not perturbed.

\begin{figure}[htbp]
  \centering
  \includegraphics[width=\linewidth]{./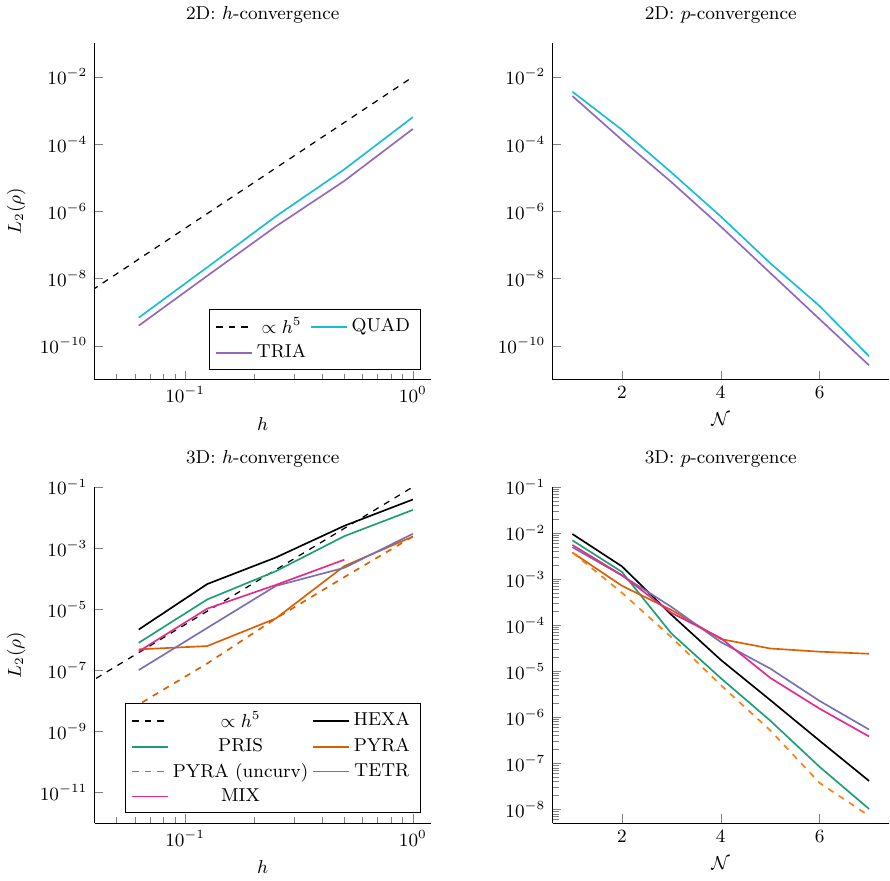}
  \caption{Validation of the spatial discretization of the entropy-stable DGSEM on curvilinear meshes for 2D (top) and 3D (bottom). Left: $p$-convergence on a $4^3$ grid with $\ppn \in [1,7]$. Right: $h$-convergence for $\ppn=4$. In 3D, each hexahedral element is either kept as one hexahedron (\texttt{HEXA}) or split into two prisms (\texttt{PRIS}), six pyramids (\texttt{PYRA}), or six tetrahedrons (\texttt{TETR}). In 2D, each quadrilateral element is either kept as one quadrilateral (\texttt{QUAD}) element or split into two triangular (\texttt{TRIA}) elements. In addition, a curvilinear hybrid mesh (MIX) composed of all four element types and an unperturbed pyramid mesh (\texttt{PYRA} (uncurv)) are considered.}
  \label{fig:validation:conv}
\end{figure}

\begin{figure}
  \includegraphics[width=\linewidth]{./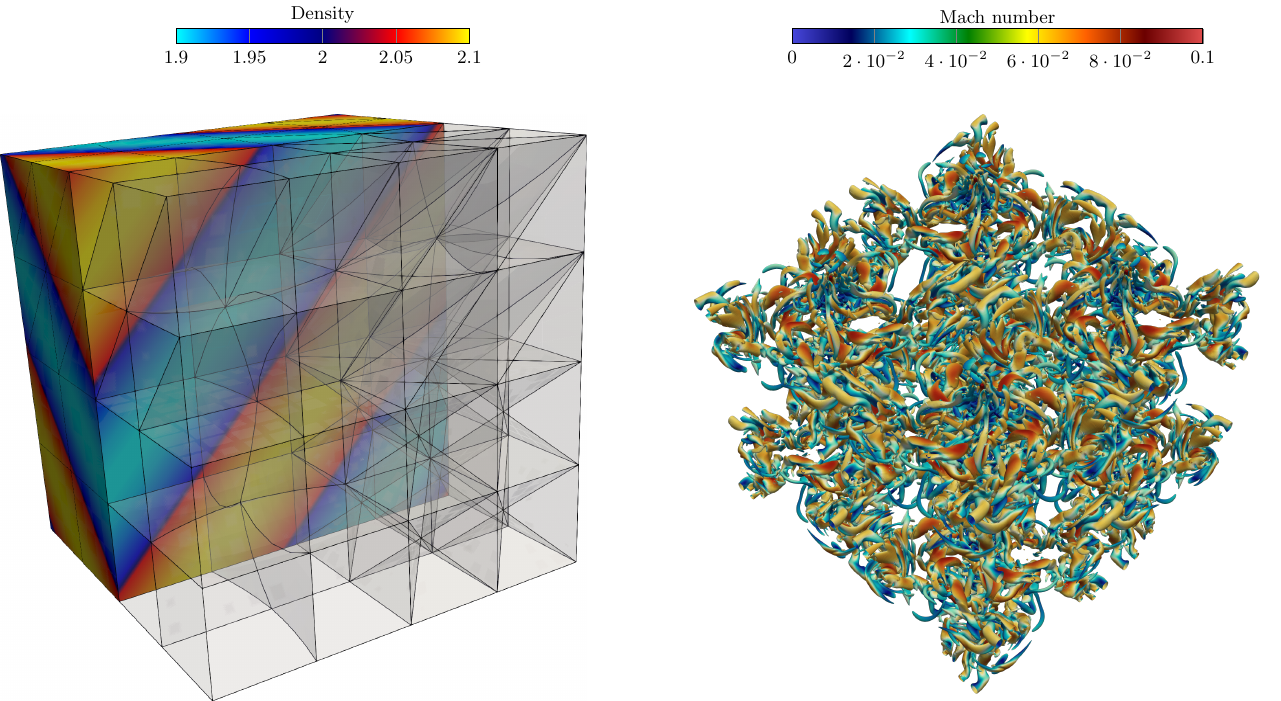}
  \caption{Left: Density of the convergence test with a visualization of the mixed, curvilinear mesh with \num{248} elements.
    Right: Instantaneous flow field of the weakly compressible TGV case with $M = 0.1$ and $Re = 1600$ at time $t = 10$ using
  around \num{1.34E8} DOF, visualized by iso-surfaces of the Q-criterion colored by Mach number, taken from~\cite{Kempf2024}.}
  \label{fig:convtest_tgv}
\end{figure}

\subsection{Weakly Compressible Taylor--Green Vortex}
The Taylor-Green vortex (TGV)~\cite{Taylor1937} is a popular test case for the validation of entropy conservation and
stability. The initial conditions are given as
\begin{align}
  \vel(\pos,t=0) &= \left[ \begin{matrix} \phantom{-} u_0\sin(\nicefrac{\pos[1]}{L})\cos(\nicefrac{\pos[2]}{L})\cos(\nicefrac{\pos[3]}{L}) \\
                                                     -u_0\cos(\nicefrac{\pos[1]}{L})\cos(\nicefrac{\pos[2]}{L})\cos(\nicefrac{\pos[3]}{L}) \\ 0
                                     \end{matrix} \right], \\
  p(\pos,t=0)    &= p_0 + \nicefrac{\rho_0
      u_0^2}{16}\lr{\cos(\nicefrac{2\pos[1]}{L})+\cos(\nicefrac{2\pos[2]}{L})}\lr{2+\cos(\nicefrac{2\pos[3]}{L})}, \nonumber
  \label{eq:TGV_init}
\end{align}
on a computational domain of size $\Omega = [0,2\pi L]^3$ with periodic boundary conditions.
Here, $L=1$ denotes the characteristic length, $u_0$ is the magnitude of the initial velocity fluctuations, and $\rho_0$ is the
reference density.
The initial pressure $p_0$ is chosen consistent to the prescribed Mach number $\mathrm{M}_0 = u_0 \sqrt{\nicefrac{\rho_0}{\gamma
p_0}}$. The desired Reynolds number, $\mathrm{Re} = \nicefrac{\rho_0 u_0 L}{\mu_0}$, is obtained by adjusting the dynamic viscosity $\mu_0$.
The density $\rho$ is obtained from the equation of state of a perfect gas $\rho = \nicefrac{p}{RT}$ with $T(\pos,t=0)=T_0$.

\subsubsection{Entropy Conservation and Stability}
First, the entropy conservation and stability properties are investigated using the inviscid TGV in the weakly compressible regime with $M = 0.1$.
A common metric to check entropy conservation is the change of discrete integral entropy over time, given as
\begin{align}
  \Delta_{\mathbb{S}} = \entropy(t)-\entropy(t=0), \quad \entropy(t) = \sum\limits_{\iel=1}^{\nel}
  \projE{\entropy^{(\iel)}(t),\J^{(\iel)}}.
\end{align}
As previously described, a hexahedral grid of size $4^3$ is utilized, where each hexahedral element is split into the corresponding element type.
In addition, a mixed elements grid is considered with polynomial degrees $\ppn = \lbrace 4,5,6,7,8 \rbrace$, resulting in $\num{16000}$ up to $\num{279936}$ DOFs.
The entropy conserving two-point flux of \citet{Chandrashekar2013} (CH) is used, together with a Lax--Friedrichs numerical flux function across adjacent elements.
The temporal evolution of the integral entropy error $\Delta_{\mathbb{S}}$, cf.~\cref{fig:tgv}, illustrates that the CH flux recovers the entropy conservation property for the case without additional surface dissipation, i.e., only the central part of the Riemann solver is employed. Otherwise, the scheme is entropy-stable.

\begin{figure}[htbp!]
  \centering
  \includegraphics[width=\linewidth]{./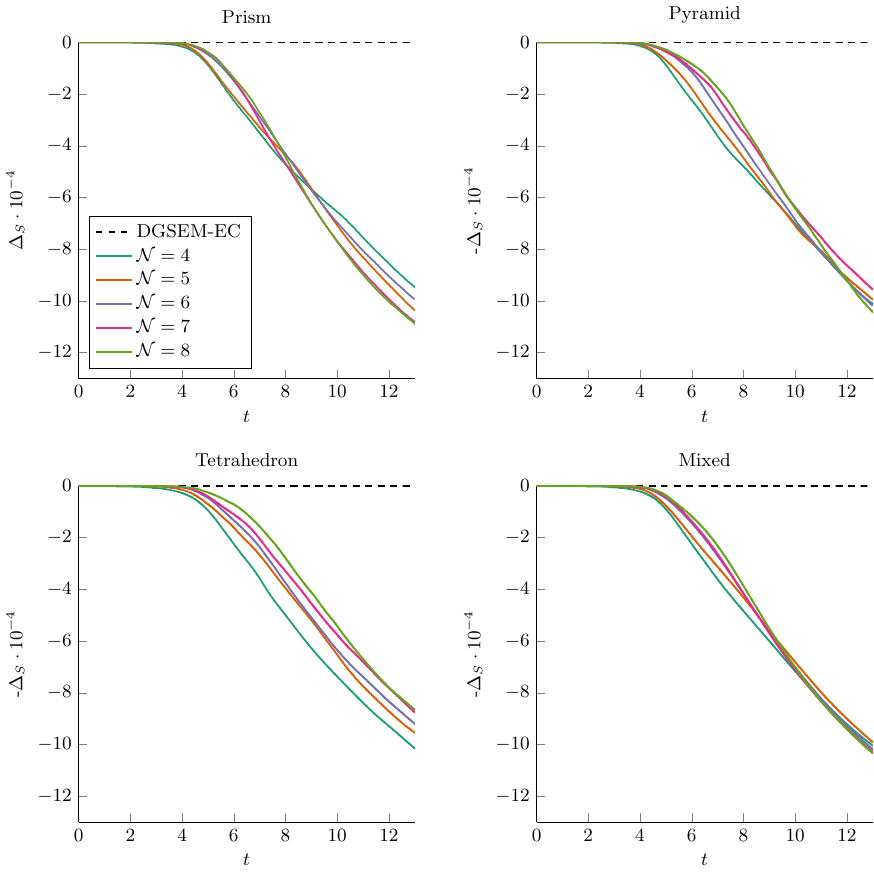}
  \caption{Temporal evolution of the integral entropy conservation errors for the Euler equations using the weakly compressible TGV at $M=0.1$. The grid is composed of prisms, pyramids, tetrahedrons, and a mixed grid (from left to right and top to bottom) with ($\ppn = \lbrace 4,5,6,7,8 \rbrace$) and without (DGSEM-EC) additional surface dissipation.}%
  \label{fig:tgv}
\end{figure}

\subsubsection{Kinetic Energy and Dissipation Rate}
To assess the accuracy of the numerical scheme, the (viscous) Taylor-Green vortex at $\mathrm{Re}=1600$ in the weakly compressible
regime with $\mathrm{M}=0.1$ is considered, cf.~\cref{fig:convtest_tgv} (right).
Two common metrics are the instantaneous kinetic energy $e_k$ and the solenoidal component $\epsilon_S$ of the viscous dissipation rate, defined as
\begin{align}
  e_k        &= \frac{1}{2 \rho_0 u_0^2 \abs{\Omega}} \int_{\Omega} \rho \vel \cdot \vel~d\Omega, \\
  \epsilon_S &= \frac{L^2}{\mathrm{Re} u_0^2 \abs{\Omega}} \int_{\Omega} \boldsymbol{\omega} \cdot \boldsymbol{\omega}~d \Omega,
\end{align}
respectively.
Here, $\boldsymbol{\omega}=\nabla \times \vel$ denotes the vorticity.
The solenoidal dissipation rate can be related to the amount of dissipated kinetic energy by small scale turbulence.
The results by~\citet{Debonis2013} serve as a reference.
The convergence of the instantaneous kinetic energy and the solenoidal dissipation rate during grid refinement to the reference is illustrated in~\cref{fig:tgv_incomp} for the mixed elements grid.
The grid sequence ranges from $4^3$ to $32^3$ (in terms of initial hexahedral elements), resulting in a total of $\num{1.31e5}$ up to $\num{66.56e6}$ DOF for $\ppn = 7$.
An illustration of the instantaneous flow field at $t=10$ is depicted in \cref{fig:convtest_tgv} (right).
\begin{figure}[htbp!]
  \centering
  \includegraphics[width=\linewidth]{./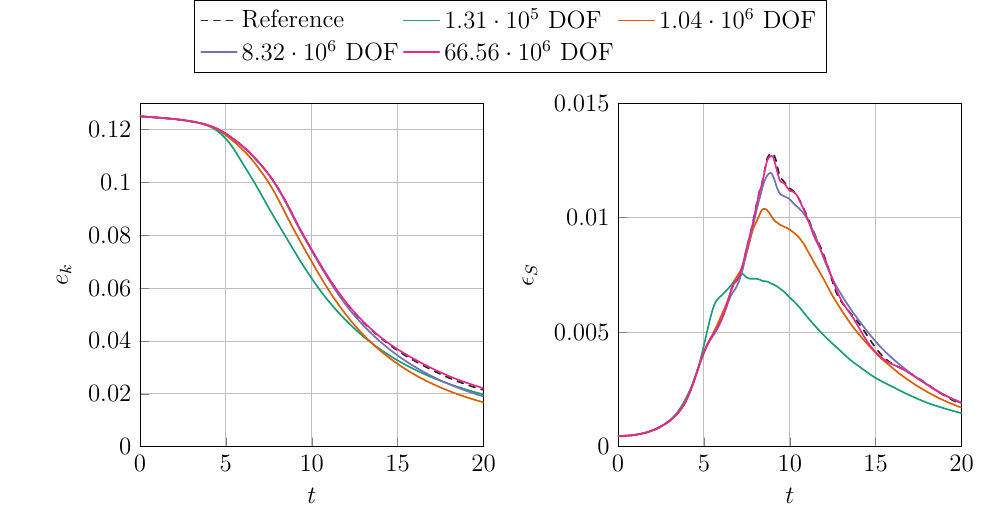}
  \caption{Weakly compressible TGV at $M=0.1$ for $\ppn = 7$ using a hybrid grid. Left: Temporal evolution of the instantaneous kinetic energy. Right: Temporal evolution of the solenoidal dissipation rate.
  The results of~\citet{Debonis2013} serve as a reference.}
  \label{fig:tgv_incomp}
\end{figure}
\subsection{Flow Around the Common Research Model}

The applicability of the proposed scheme to more complex, large-scale problems is demonstrated using the flow around the NASA Common Research Model (CRM), a well-established aerodynamic benchmark geometry developed for code validation and comparative studies in computational fluid dynamics (CFD) \cite{Vassberg2008}.
The CRM, representative of a modern transonic commercial transport aircraft, was designed to mitigate inconsistencies in drag prediction among CFD solvers.
The geometry used in this example is based on the standard CRM CAD model\footnote{\url{https://commonresearchmodel.larc.nasa.gov}},
and it is meshed using \num{147763} second-order curved hybrid elements.

It is imperative to acknowledge that this simulation solely serves as a proof-of-principle; thus, no detailed boundary-layer resolution is included, and only the aircraft's geometry is sufficiently captured. The chosen configuration mimics taxiing conditions on the runway at ambient atmospheric settings, assuming an inviscid flow regime with slip wall boundary conditions. This simplified setup results in an initial density of $\rho=\SI{1.225}{\kilogram\per\meter\cubed}$, a pressure of $p=\SI{101325}{\pascal}$, and a velocity vector of $\vel=\arr{\SI{15}{\meter\per\second},\SI{0}{\meter\per\second},\SI{0}{\meter\per\second}}$. Farfield boundary conditions are applied at the outer boundaries of the computational domain to simulate the undisturbed ambient environment during taxiing, ensuring well-posedness of the inviscid flow problem.

In~\cref{fig:crm}, the second-order unstructured surface grid together with the instantaneous distribution of the surface pressure at $t=\SI{1.2}{\second}$ using $\ppn=3$ are illustrated.
The flow acceleration following the stagnation point on the tip of the aircraft as well as on the wing surface are clearly captured and visible as regions of reduced static pressure.

\begin{figure}[htbp!]
  \centering
  \pgfkeys{/pgf/number format/.cd,1000 sep={\,}}
  \begin{tikzpicture}
  \scriptsize
  \begin{axis}[
    hide axis,
    width=\linewidth,
    scale only axis,
    enlargelimits=false,
    axis equal image,
    colormap name=viridis,
    colorbar horizontal,
    point meta min= 10200,
    point meta max=10350,
    colorbar style={
        width=0.25*\pgfkeysvalueof{/pgfplots/parent axis width},
        height=0.015*\pgfkeysvalueof{/pgfplots/parent axis width},
        color=black,
        scaled x ticks = false,
        at={(rel axis cs:0.7,0.13)},
        anchor=south west,
        title={Pressure in \si{\pascal}},
        title style={yshift=-7pt},
      }]
    \addplot graphics [xmin=0,xmax=4000,ymin=0,ymax=2000] {./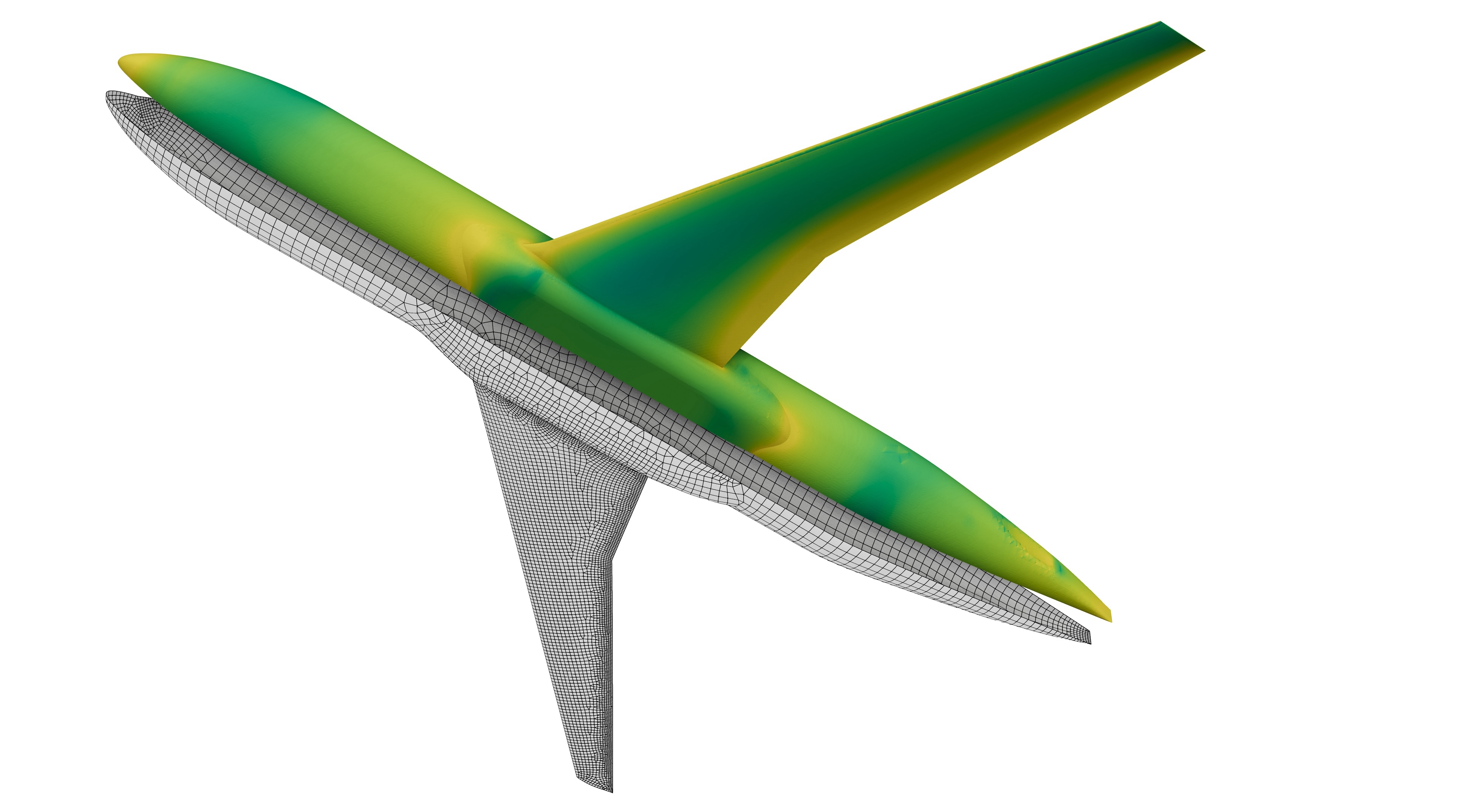};
  \end{axis}
  \end{tikzpicture}
  \caption{NASA common research model (CRM). Instantaneous distribution of surface pressure at $t=1.2$ with the second-order unstructured surface grid.}
  \label{fig:crm}
\end{figure}

\section{Conclusion}
\label{sec:conclusion}

High-order methods such as the \DG~scheme are promising candidates for the numerical simulation of complex, turbulent flows due to their
geometric flexibility as well as their favorable dissipation and dispersion properties compared to low-order schemes.
However, high-order methods are prone to instabilities in the presence of underresolved flow features such that adequate
stabilization techniques are mandatory to provide a robust numerical scheme.
Here, the entropy-stable \DG~formulation based on diagonal-norm SBP operators serves as a notable example.
The efficient construction of multidimensional SBP operators of arbitrary order is challenging.
One proposed solution to this issue is to use of a tensor-product-based approach such as the \DGSEM, which in its original form, is
however limited to hexahedral elements.
\newline
The key contribution of this work is the formulation of an entropy-stable \DGSEM~of arbitrary order on curvilinear hybrid meshes, including hexahedrons, prisms, pyramids, and tetrahedrons.
The extension of the \DGSEM~to more complex element shapes is achieved by means of a collapsed coordinate transformation.
Legendre--Gauss quadrature nodes are utilized as collocation points in conjunction with a generalized SBP operator and
entropy-projected variables.
To circumvent the penalizing time step restriction imposed by the collapsing, modal rather than nodal degrees of freedom are evolved in time, thereby relying on a memory-efficient weight-adjusted approximation to the inverse of the mass matrix.
An extension of the purely hyperbolic operator to hyperbolic-parabolic problems is accomplished by means of a lifting procedure.
The proposed scheme was thoroughly validated, starting with a series of numerical experiments and progressing to a more
advanced large-scale application: the viscous flow around the common research model.
These studies demonstrated the free-stream preservation, convergence, and entropy conservation / entropy stability properties, as well as
the applicability to real-world problems.
In the future, the numerical scheme will be applied to more complex, turbulent flows, and its potential to enhance efficiency will be explored further by porting the scheme to GPU-accelerated systems.

\section*{Acknowledgements}
This work was funded by the European Union and has received funding from the European High Performance Computing Joint Undertaking (JU) and Sweden, Germany, Spain, Greece, and Denmark under grant agreement No 101093393.
The research presented in this paper was funded in parts by Deutsche Forschungsgemeinschaft (DFG, German Research
Foundation) under Germany's Excellence Strategy - EXC 2075 - 390740016 and by the state of Baden-Württemberg under the project Aerospace 2050 MWK32-7531-49/13/7 "FLUTTER"/"QUASAR".
We acknowledge the support by the Stuttgart Center for Simulation Science (SimTech).
Further, we want to gratefully acknowledge funding by the DFG through SPP 2410 Hyperbolic Balance Laws in Fluid Mechanics: Complexity, Scales, Randomness (CoScaRa).
The authors gratefully acknowledge the support and the computing time on ``Hawk'' provided by the HLRS through the project ``hpcdg''.

\bibliographystyle{elsarticle-num-names}
\bibliography{references}

\end{document}